        \def\1{1\kern-3pt\rm l}
\newcommand{\real}{\mathbb{R}}
\newcommand{\complex}{\mathbb{C}}
\newcommand{\nat}{\mathbb{N}}
\newcommand{\xtnd}{\widetilde}
\newcommand{\dd}{\textrm{\rm{d}}}
\newcommand{\ddc}{\textrm{\rm{dd}}^{c}}
\newtheorem{thm}{Theorem}[section]
\newtheorem{cor}[thm]{Corollary}
\newtheorem{lem}[thm]{Lemma}
\theoremstyle{remark}
\newtheorem{exmp}[thm]{Example}
\numberwithin{equation}{section}
\begin{document}
\baselineskip15pt

\title[Some achieved wedge products of positive currents]{Some achieved wedge products of positive currents}
\author{Ahmad K. Al Abdulaali}
\address{Department of Mathematics and Statistics, College of Science, King Faisal University, P.O  Box 380 Al-Ahsa, 31982, Saudi Arabia}
\email{aalabdulaaly@kfu.edu.sa}
\subjclass[2010]{32U05, 32U40}
\keywords{Lelong number, plurisubharmonic functions, plurisubharmonic currents}

\begin{abstract}
In this paper, we study the existence of the current $g T$ for  positive plurisubharmonic currents $T$ and unbounded plurisubharmonic functions $g$.
\end{abstract}

\maketitle

\section{Introduction}

The wedge product is a very vital subject in the field of currents, and mainly targeted indeed. In general, wedge product of currents can not be achieved unless further conditions are considered. Throughout this paper we consider $\Omega$ to be an open subset of $\complex^{n}$ and $T$ to be a current of bi-dimension $(p,p),  p\geq 1$. We denote by $Psh^{-}(\Omega)$ the set of all negative plurisubharmonic functions on $\Omega$. For a function $g \in Psh^{-}(\Omega)$, put $L_{g}$ to be the set of all locus points of $g$ which consists of the points $z \in \Omega$ where $g$ is unbounded in every neighborhood of $z$. The pole set of $g$ is by definition $P_{g}=\{ g=- \infty\}$. It is obvious that $L_{g}$ is closed, and $\overline{P_{g}} \subset L_{g}$. Recall also that $T$ is said to be closed if $\dd T=0$, and is said to be plurisubharmonic (resp. plurisuperharmonic) if $\ddc T\geq 0$ (resp. $\ddc T\leq 0$). 

Our main concern is to give a definition of the current $gT$. Obviously, apart of $L_{g}$, the current $gT$ is well defined. The real challenge is studying the existence of this product across $L_g$. In such a situation, this product may have no sense due to the behaviors of $g$ and $T$. For example, In $\complex$ put
\begin{equation}\label{gT:exmp}
T=\frac{(-\log{|z|^{2}})^{\frac{-3}{2}}}{|z|^{2}} \ \textrm{and} \  g=log|z|^{2}.
\end{equation}
Then we get a positive and  integrable current $T$ of bi-dimension $(1,1)$ on $B(0,1)$ which is plurisubharmonic outside the origin, and a function $g\in Psh^{-}(B(0,1)) \cap \mathcal{C}^{\infty}(B(0,1)\setminus \{ 0\})$. Despite the fact that $\mathcal{H}_{2(1)-2}(\{ 0\})=1$, the current $gT$ is of infinite mass near the origin. Now we can feel the motivation behind the paper which is basically about finding sufficient conditions on $T$ and $L_{g}$ that make $gT$ well defined. The study is also consistent with the evolution of the subject as the case when $T$ is closed was considered before in many works. In fact, Demailly \cite{De2} (1993) proved the existence of $gT$ and $\ddc g\wedge T$ as soon as $\mathcal{H}_{2p-1}(L_{g} \cap \rm{Supp} \ T)=0$.  Forn\ae ss and Sibony \cite{Fo-Si} (1994) generalized the work of Demailly to higher Hausdorff dimension when they succeeded to define the currents $gT$ and $\ddc g \wedge T$ where $\mathcal{H}_{2p}(L_{g})=0$. In both studies the closedness property played a main role since it was used to give the relation between $gT$ and $\ddc g \wedge T$. Namely, with such a property, the definition of $\ddc g \wedge T$ is given by $\ddc (gT)$ in the sense of distribution. Unfortunately, this relation becomes more complicated once we deal with $\ddc$-signed  currents as the terms $\dd g \wedge \dd^{c}T$ and $g \ddc T$ have their contribution. This fact causes difficulties to achieve a definition of $gT$, and forced some interested researchers to study the current $\ddc g \wedge T$ separately. In what follows we summarize our main results.

\medskip

\emph{Let $T$ be a positive  plurisubharmonic current of bi-dimension $(p,p)$ on  $\Omega $ and $g \in Psh^{-}(\Omega)\cap \mathcal{C}^{1}(\Omega\setminus L_{g})$. Then  in each of the following cases the current $gT$ is well defined, that means if  $(g_{j})$ is a sequence of decreasing smooth plurisubharmonic functions on $\Omega$ converging to $g$ in $\mathcal{C}^{1}(\Omega \setminus L_{g})$, then $g_{j}T$ converges weakly$^{*}$ to a current denoted by $gT$.
\begin{enumerate}
\item  $L_{g}$ is compact, $L_{g}=P_{g}$ and $p\geq 2$. (\textbf{Theorem \ref{p geq 2}})
\item $L_{g}$ is compact and $g \dd^{c} T$ is well defined. (\textbf{Theorem \ref{1st:main}})
\item $\mathcal{H}_{2p-1}(L_{g}\cap \rm{Supp} \ T)=0$ and $g \dd^{c}T$ is well defined. (\textbf{Theorem \ref{A:H2p-1}})
\item $\mathcal{H}_{2p-2}(L_{g}\cap \rm{Supp} \ T)$ is locally finite.(\textbf{Theorem \ref{A:H2p-2}})
\end{enumerate}
}

\medskip

The precautions taken in the previous results on the thickness of $L_{g}$ and the properties of $T$ are extremely important to guaranty the existence of $gT$. Actually, the failure to define $gT$ with the choices of (\ref{gT:exmp}) due to fact that both $g\dd^{c}T$ and $g \ddc T$ are of infinite mass near the origin. In a recent work, Al Abdulaali-El Mir \cite{Ah-El3} obtained the current $g^{k}T, \ k>0$ when $g$ is radial and $L_g$ is reduced to a single point.

The second part of the paper is devoted to discuss the current $\ddc g \wedge T$. As a consequence of the discussions of this part, we show that the quantity
\begin{equation}
\mu(T,g)=\lim_{r \to -\infty} \int_{ \{ g<r\} } T \wedge (\ddc g) \wedge \beta^{p-1}, \ \beta=\ddc \| z \|^{2}
\end{equation}
exists under the hypotheses of Theorem \ref{1st:main}. Furthermore, by a counterexample it is shown that the induced results can not be obtained for the case of positive plurisuperharmonic currents without further hypotheses.

\section{The Current $gT$}

Let us start with a result due to Al Abdulaali \cite{Ah6}. In the following few lines we include the proof in our settings.

\begin{lem}\label{compact}
Let $T$ be a positive plurisubharmonic current of bi-dimension $(p,p)$ on $\Omega $ and $g \in Psh^{-}(\Omega)\cap \mathcal{C}^{1}(\Omega\setminus A)$ for some compact subset $A$ of $\Omega$. Assume that $(g_{j})$ is a sequence of decreasing smooth plurisubharmonic functions on $\Omega$ converging to $g$ in $\mathcal{C}^{1}(\Omega \setminus A)$. Then
\begin{enumerate}
\item $g \ddc T$ is a well defined current on $\Omega$, and the trivial extension $\xtnd{\ddc g \wedge T}$ exists.
\item $\ddc g \wedge T$ is a well defined current as soon as  $A$, in addition,  is complete pluripolar and $p\geq 2$.
\item $\ddc g \wedge T$ is a well defined current when $A$ is considered to be a single point.
\end{enumerate}
\end{lem}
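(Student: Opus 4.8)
The plan is to establish the three assertions in increasing order of difficulty, with the common engine being the standard $\ddc$-cohomological manipulation on $\Omega \setminus A$ combined with a controlled truncation near $A$. Since $A$ is compact, fix a cutoff $\chi \in \cinfnot(\Omega)$ with $\chi \equiv 1$ in a neighborhood of $A$ and $0 \le \chi \le 1$; I will work with test forms supported where $\chi$ does or does not vanish separately. Away from $A$ everything is classical: $g$ is $\mathcal{C}^1$, the $g_j$ decrease to $g$ in $\mathcal{C}^1_{loc}(\Omega \setminus A)$, and $\ddc g_j \wedge T \to \ddc g \wedge T$ weakly$^*$ on $\Omega \setminus A$, while $g_j \ddc T \to g\,\ddc T$ there by monotone/dominated convergence against the positive measure coefficients of $\ddc T$ (here positivity of $T$ and $\ddc T \ge 0$ are used to get the needed $L^1_{loc}$ domination, since $g \le 0$ and the $g_j$ are uniformly bounded above).

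For (1), the key point is that $g\,\ddc T$ makes sense as a current on all of $\Omega$, not just off $A$: because $g \in L^1_{loc}(\Omega)$ (it is plurisubharmonic) and the coefficients of $\ddc T$ are measures, the product $g \cdot \ddc T$ is a well-defined current with measure coefficients, and the convergence $g_j \ddc T \to g\,\ddc T$ holds on $\Omega$ by dominated convergence (dominating $g_j$ by $g_1$ from above and by $g$ from below near a fixed compact). Then one writes, for the smooth approximants and any test form $\varphi$,
\begin{equation}
\langle \ddc g_j \wedge T, \varphi \rangle = \langle g_j T, \ddc \varphi \rangle - \langle g_j \ddc T, \varphi \rangle - \langle \dd g_j \wedge \dc T + \dc g_j \wedge \dd T, \varphi \rangle,
\end{equation}
or rather the cleaner identity $\ddc(g_j T) = \ddc g_j \wedge T + \dd g_j \wedge \dc T + \dc g_j \wedge \dd T + g_j \ddc T$, restricted to a neighborhood of $A$ on which we only need the trivial extension. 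The trivial extension $\xtnd{\ddc g \wedge T}$ exists precisely because $\ddc g \wedge T$ is a positive current on $\Omega \setminus A$ (it is $\ge 0$ there since $g$ is psh and $T \ge 0$) with locally finite mass near $A$: its mass on a shell is controlled by pairing with $\chi \beta^p$ and using Stokes on $\Omega \setminus A$ together with the already-established boundedness of $g\,\ddc T$ and the (positive) current $T$. I expect assertion (1) to follow from this mass estimate plus the classical Skoda–El Mir type extension argument.

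For (2) and (3), the obstacle — and the genuinely hard part — is promoting $\xtnd{\ddc g \wedge T}$ to an honest limit $\ddc g_j \wedge T \to \ddc g \wedge T$ across $A$, i.e. showing the limit current puts no mass on $A$ and the cross terms $\dd g_j \wedge \dc T$, $\dc g_j \wedge \dd T$ converge properly. When $A$ is complete pluripolar and $p \ge 2$ (case (2)), I would choose a negative psh exhaustion $\psi$ of $\Omega \setminus A$ with $\psi = -\infty$ on $A$ and use it to build cutoffs $\chi(\psi/R)$; the bi-dimension $p \ge 2$ gives one extra "wedge slot" of $\beta$ so that terms like $\dd(\chi\circ(\psi/R)) \wedge \dc g_j \wedge T \wedge \beta^{p-2}$ can be shown to vanish as $R \to \infty$ via Cauchy–Schwarz against $\dd \psi \wedge \dc \psi \wedge T$, which has locally finite mass because $T$ is psh. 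For case (3), $A$ a single point, the statement should reduce to a dimension count: $\mathcal{H}_{2p-1}(\{pt\}) = 0$ when $2p - 1 > 0$, i.e. always for $p \ge 1$, so one can invoke the support/Federer-type argument (a current of bi-dimension $(p,p)$ with measure coefficients supported on a set of Hausdorff dimension $< 2p$ that is a boundary must vanish) to conclude the mass on the point is zero, and then the convergence across the point follows from the convergence on $\Omega \setminus A$ plus uniform mass bounds. The main technical nuisance throughout will be justifying the disappearance of the $\dd\chi$-terms and the cross terms $\dd g_j \wedge \dc T$ uniformly in $j$; this is where positivity of $T$ and $\ddc T$, and the $\mathcal{C}^1$-convergence of $g_j$, must be combined carefully.
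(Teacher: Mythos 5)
Your treatment of part (1) has a genuine gap, and it is the central one. You assert that $g\,\ddc T$ is automatically a well-defined current because $g\in L^{1}_{loc}(\Omega)$ and the coefficients of $\ddc T$ are measures; this is false. Lebesgue-local integrability of a plurisubharmonic function says nothing about its integrability with respect to the trace measure $\ddc T\wedge\beta^{p-1}$ (think of $g=\log|z|$ against a positive $\ddc T$ carrying an atom at the pole of $g$). Establishing exactly this integrability, simultaneously with the local mass bound for $\ddc g\wedge T$ near $A$, \emph{is} the content of part (1), and your later mass estimate for $\ddc g\wedge T$ explicitly invokes the ``already-established boundedness of $g\,\ddc T$'', so the argument is circular. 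The paper obtains both bounds from a single Stokes identity: with $f$ a cutoff equal to $1$ near $A$, $\int \ddc(fg_{j})\wedge T\wedge\beta^{p-1}=\int f g_{j}\,\ddc T\wedge\beta^{p-1}\le 0$, so the sum of the two nonnegative quantities $\int f\,\ddc g_{j}\wedge T\wedge\beta^{p-1}$ and $-\int f g_{j}\,\ddc T\wedge\beta^{p-1}$ is dominated by terms containing only derivatives of $f$, which are supported in a compact set disjoint from $A$ where $g_{j}\to g$ in $\mathcal{C}^{1}$; that uniform bound is the missing engine, monotone convergence then defines $g\,\ddc T$, and Banach--Alaoglu gives the subsequential limits of $\ddc g_{j}\wedge T$.

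Part (3) of your proposal is actually wrong, not merely incomplete: the current to be analyzed, namely the difference between a weak$^{*}$ limit $S$ of $\ddc g_{j_{s}}\wedge T$ and $\xtnd{\ddc g\wedge T}$, has bi-dimension $(p-1,p-1)$, not $(p,p)$, so your dimension count is off by one, and for $p=1$ the difference is a distribution supported at the point which need not vanish: with $T\equiv 1$ and $g=\log|z|^{2}$ in $\complex$, the limit of $\ddc g_{j}\wedge T$ is a nonzero multiple of the Dirac mass, not the trivial extension. What must be proved is that the limit is independent of the subsequence, and the paper does this by writing the atomic part as $c\,\delta_{0}$ at the level of trace measures and computing $c$ by an integration by parts involving only $g$, $T$ and the cutoff, hence the same $c$ for every subsequence. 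For part (2) your route (complete pluripolar exhaustion $\psi$, cutoffs $\chi(\psi/R)$, Cauchy--Schwarz) differs from the paper's, which applies the Dabbek--Elkhadra--El Mir extension theorem to produce a positive residual and then shows the compactly supported positive plurisubharmonic current $F=S-\xtnd{\ddc g\wedge T}$ of bi-dimension $(p-1,p-1)$, $p\ge 2$, must vanish; but as written your sketch rests on the unproved claim that $\dd\psi\wedge\dc\psi\wedge T$ has locally finite mass for an unbounded $\psi$, and it never isolates the structural fact (positivity and plurisubharmonicity of the difference current) that actually forces the limit to agree with the trivial extension. So (2) is incomplete, while (1) and (3) contain genuine errors.
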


\begin{proof}
Let $W$ and $W^{'}$ be neighborhoods of $A$ such that $W \Subset W^{'} \Subset \Omega$, and take a positive function $f \in \mathcal{C}^{\infty}_{0}(W^{'})$ so that $f=1$ on a neighborhood of $W$. Then we have
\begin{equation}\label{ddcfg}
 \begin{split}
 \int_{W^{'}} \ddc (f g_{j})\wedge T\wedge \beta^{p-1}= \int_{W^{'}} f g_{j} \ddc T \wedge \beta^{p-1}\leq 0.
 \end{split}
\end{equation}
This implies that 

\begin{equation}\label{fddcg}
 \begin{split}
0 & \leq \int_{W^{'}}f \ddc  g_{j}\wedge T \wedge \beta^{p-1} -  \int_{W^{'}} f g_{j} \ \ddc T \wedge \beta^{p-1} \\ & \leq \left| \int_{W^{'}}\dd  g_{j} \wedge \dd^{c}f \wedge T \wedge \beta^{p-1}\right|+ \left| \int_{W^{'}}\dd  f \wedge \dd^{c}g_{j} \wedge T \wedge \beta^{p-1} \right| 
\\ &  \ + \left| \int_{W^{'}} g_{j} \ddc f \wedge T \wedge \beta^{p-1}\right|. 
 \end{split}
\end{equation}
Thanks to the properties of $f$, each term of the first line integrals of (\ref{fddcg}) is uniformly bounded. Therefore, one can infer the existence of both extensions $\xtnd{g\ddc T}$ and $\xtnd{\ddc g \wedge T}$. Notice that, the current $g \ddc T$ is well defined by the monotone convergence. And by Banach-Alaoglu the sequence $(\ddc g_{j}\wedge T)$ has a subsequence  $(\ddc g_{j_{s}}\wedge T)$ which converges weakly$^{*}$ to a current denoted by $S$. To show (2), we first note that $\ddc S$ is a well defined current as well. Hence by \cite{Da-Kh-El} the residual current $R=\xtnd{\ddc S}- \ddc (\xtnd{\ddc g \wedge T})$ is positive and supported in $A$. Now, if we set $F:= S - \xtnd{\ddc g \wedge T}$ we find clearly that $F$ is a positive current where $$ \ddc F =  \ddc S - \ddc (\xtnd{\ddc g \wedge T}) \geq \ddc S-  \xtnd{\ddc S} \geq 0.$$ As $F$ is a compactly supported current with bi-dimension $(p-1,p-1)$, one can deduce that $F \equiv 0$. The third statement comes immediately from the fact that the distribution $\mu:= (S - \xtnd { \ddc g \wedge T})\wedge \beta^{p-1}$ is positive and supported in $A$. Indeed, $A$ can be assumed to be the origin, and hence there exists a positive constant $c$ such that $\mu=c\delta_{0}$ where $\delta_{0}$ is the Dirac measure. Clearly, the constant $c$ is independent from the choice of $j_{s}$ since 
\begin{equation*}
\begin{split}
c=\mu(f)&=\lim_{j_{s} \to \infty} \int_{W^{'}} f  \ddc g_{j_{s}} \wedge T \wedge \beta^{p-1} - \int_{W^{'}} f \xtnd{ \ddc g \wedge T} \wedge \beta^{p-1}\\
&= \int_{W^{'}}( g  \ddc f +2 \dd g \wedge \dd^{c} f ) \wedge T \wedge \beta^{p-1} \\
& \ \ + \int_{W^{'}} f   g \ddc T \wedge \beta^{p-1} - \int_{W^{'}} f \xtnd{ \ddc g \wedge T} \wedge \beta^{p-1}.
\end{split}
\end{equation*}
In other words, $\ddc g \wedge T$ is well defined.
\end{proof}

As a consequence of the previous result, the wedge products in \cite{Al-Ba2} and \cite{Ah4} can be generalized to case when $\mathcal{H}_{2p-2}(A)$ is locally finite. (See \cite{Ah6})

\begin{lem}\label{dg:dcg}
Let $T$ be a positive plurisubharmonic current of bi-dimension $(p,p), \ p \geq 1$ on  $\Omega $ and $g \in Psh^{-}(\Omega)\cap \mathcal{C}^{1}(\Omega\setminus L_{g})$. If $L_{g}$ is compact, then for every compact subset $K$ of $\Omega$ we have $\left\Vert \displaystyle{\frac{\dd g \wedge \dd^{c}g}{(-g)^{1+\varepsilon}}}\wedge T\right\Vert_{K \setminus L_{g}} < \infty$, $0< \varepsilon <1$.
\end{lem}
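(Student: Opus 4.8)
The plan is to localise near the compact set $L_{g}$, to replace $g$ by a sequence of decreasing smooth plurisubharmonic functions $g_{j}\downarrow g$ converging to $g$ in $\mathcal{C}^{1}$ away from $L_{g}$ (e.g.\ convolution regularisations, defined on a neighbourhood of the cut-off region), and then to run a Chern--Levine--Nirenberg type integration by parts built on the auxiliary plurisubharmonic function $v_{j}:=-(-g_{j})^{1-\varepsilon}$. Fix a compact $K\Subset\Omega$. On every compact subset of $\Omega\setminus L_{g}$ the coefficient form $\dd g\wedge\dd^{c}g/(-g)^{1+\varepsilon}$ is continuous and $T$ has finite mass, so the only possible source of infinite mass is the accumulation of the integral at $L_{g}$. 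Since $L_{g}$ is compact, choose open sets $W\Subset W'\Subset\Omega$ with $L_{g}\subset W$ and $f\in\mathcal{C}^{\infty}_{0}(W')$ with $0\le f\le1$ and $f\equiv1$ on $W$; then $\Supp\dd f\subset\overline{W'}\setminus W$ is a compact subset of $\Omega\setminus L_{g}$, on which $g$ — hence $g_{j}$ for $j$ large — is bounded and bounded away from $0$. As $K\setminus W$ is itself a compact subset of $\Omega\setminus L_{g}$, it suffices to bound $\int_{W'\setminus L_{g}}f\,\frac{\dd g\wedge\dd^{c}g}{(-g)^{1+\varepsilon}}\wedge T\wedge\beta^{p-1}$. (We may assume $g<0$, hence $g_{j}<0$: on a connected component where this fails $g$ vanishes identically, so $\dd g=0$ and nothing is to be proved there.)

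For smooth plurisubharmonic $g_{j}<0$, the function $v_{j}:=-(-g_{j})^{1-\varepsilon}$ is a convex increasing function of $g_{j}$, hence plurisubharmonic, with $v_{j}\le0$, and
\[
\ddc v_{j}=(1-\varepsilon)(-g_{j})^{-\varepsilon}\,\ddc g_{j}+\varepsilon(1-\varepsilon)(-g_{j})^{-1-\varepsilon}\,\dd g_{j}\wedge\dd^{c}g_{j}\ \ge\ \varepsilon(1-\varepsilon)\,\frac{\dd g_{j}\wedge\dd^{c}g_{j}}{(-g_{j})^{1+\varepsilon}},
\]
since $\ddc g_{j}\ge0$. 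Wedging this inequality of smooth $(1,1)$-forms with the positive current $T\wedge\beta^{p-1}$ and integrating against $f\ge0$, the problem reduces to a bound for $\int_{\Omega}f\,\ddc v_{j}\wedge T\wedge\beta^{p-1}$ that is uniform in $j$.

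For that, integrate by parts: $\beta$ is closed and $fv_{j}\in\mathcal{C}^{\infty}_{0}(\Omega)$, so the identity $\int\ddc(fv_{j})\wedge T\wedge\beta^{p-1}=\int fv_{j}\,\ddc T\wedge\beta^{p-1}$ together with the Leibniz rule gives
\[
\int_{\Omega}f\,\ddc v_{j}\wedge T\wedge\beta^{p-1}=\int_{\Omega}fv_{j}\,\ddc T\wedge\beta^{p-1}-\int_{\Omega}v_{j}\,\ddc f\wedge T\wedge\beta^{p-1}-\int_{\Omega}\bigl(\dd f\wedge\dd^{c}v_{j}+\dd v_{j}\wedge\dd^{c}f\bigr)\wedge T\wedge\beta^{p-1}.
\]
The first term is $\le0$, because $f\ge0$, $v_{j}\le0$ and $\ddc T\wedge\beta^{p-1}$ is a positive measure — this is the one place where the plurisubharmonicity of $T$ enters, and it is exactly the term that disappears when $T$ is closed. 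The remaining two integrals are concentrated on $\Supp\dd f\Subset\Omega\setminus L_{g}$, where the $\mathcal{C}^{1}$-convergence $g_{j}\to g$ keeps $v_{j}$ and $\dd v_{j}$ uniformly bounded (as $-g_{j}$ stays bounded away from $0$), while $T\wedge\beta^{p-1}$ has finite mass; hence they are bounded uniformly in $j$. This yields $\int_{\Omega}f\,\ddc v_{j}\wedge T\wedge\beta^{p-1}\le C$, and consequently $\int_{\Omega}f\,\frac{\dd g_{j}\wedge\dd^{c}g_{j}}{(-g_{j})^{1+\varepsilon}}\wedge T\wedge\beta^{p-1}\le C'$ for all large $j$.

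Finally, let $j\to\infty$. On each compact $L\Subset\Omega\setminus L_{g}$ one has $\dd g_{j}\wedge\dd^{c}g_{j}/(-g_{j})^{1+\varepsilon}\to\dd g\wedge\dd^{c}g/(-g)^{1+\varepsilon}$ uniformly (again $-g$ is bounded away from $0$ on $L$), so $\int_{L}f\,\frac{\dd g\wedge\dd^{c}g}{(-g)^{1+\varepsilon}}\wedge T\wedge\beta^{p-1}\le C'$; exhausting $\Omega\setminus L_{g}$ by such compacts and invoking monotone convergence (the integrand being nonnegative) yields $\int_{W'\setminus L_{g}}f\,\frac{\dd g\wedge\dd^{c}g}{(-g)^{1+\varepsilon}}\wedge T\wedge\beta^{p-1}\le C'$; since $f\equiv1$ on $W\supset L_{g}$ and the part over $K\setminus W$ was already seen to be finite, the asserted bound on $\bigl\|\frac{\dd g\wedge\dd^{c}g}{(-g)^{1+\varepsilon}}\wedge T\bigr\|_{K\setminus L_{g}}$ follows. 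The main obstacle is the $\ddc T$ contribution isolated above: the products $g_{j}T$, $\ddc g_{j}\wedge T$, and so on need not converge to well-defined limit objects, so the whole computation must be run at the level of the smooth $g_{j}$, of which only the $\mathcal{C}^{1}$-behaviour away from $L_{g}$ is under control; and the term $g\,\ddc T$ produced by the integration by parts — absent from the treatments of closed currents — is controlled only because $\ddc T\ge0$ together with $v_{j}\le0$ pins down its sign.
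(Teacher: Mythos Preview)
Your proof is correct and follows essentially the same route as the paper: both hinge on the auxiliary plurisubharmonic function $-(-g)^{1-\varepsilon}$ and the identity
\[
\ddc\bigl(-(-g)^{1-\varepsilon}\bigr)=\varepsilon(1-\varepsilon)\,\frac{\dd g\wedge\dd^{c}g}{(-g)^{1+\varepsilon}}+(1-\varepsilon)\,\frac{\ddc g}{(-g)^{\varepsilon}}.
\]
The only difference is packaging: the paper invokes Lemma~\ref{compact} (applied to $-(-g)^{1-\varepsilon}$ in place of $g$) to conclude that $\ddc(-(-g)^{1-\varepsilon})\wedge T$ has locally finite mass across $L_{g}$, whence both positive summands do; you instead unfold that lemma's integration-by-parts argument explicitly for $v_{j}$, including the sign observation $\int fv_{j}\,\ddc T\wedge\beta^{p-1}\le0$, which is exactly inequality~(\ref{ddcfg}) in the paper's proof of Lemma~\ref{compact}.
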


\begin{proof}
Notice first that for every $0 < \varepsilon <1$ the function $-(-g)^{1-\varepsilon}$ is plurisubharmonic. Hence, by Lemma \ref{compact} the current $-\ddc (-g)^{1-\varepsilon} \wedge T$ is of locally finite mass across $L_{g}$. But, by simple computation one has 
\begin{equation}\label{dg:dcg:1}
-\ddc (-g)^{1-\varepsilon}=\frac{\dd g \wedge \dd^{c}g}{(-g)^{1+\varepsilon}}+\frac{\ddc g}{(-g)^{\varepsilon}}.
\end{equation}
This shows the result.
\end{proof}

\begin{thm}\label{p geq 2}
 Let $T$ be a positive plurisubharmonic current of bi-dimension $(p,p), \ p \geq 1$ on  $\Omega $ and $g \in Psh^{-}(\Omega)\cap \mathcal{C}^{1}(\Omega\setminus L_{g})$. If $L_{g}$ is compact, then for all $0< \varepsilon <1$ the current $\vert g \vert^{1-\varepsilon}T$ is well defined. Moreover, if $L_g=P_g$ and $p \geq 2$, then $gT$ is well defined.
\end{thm}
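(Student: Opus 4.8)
The plan is to treat the two assertions separately, using the machinery already assembled. For the first claim, that $\lvert g\rvert^{1-\varepsilon}T=-(-g)^{1-\varepsilon}T$ is well defined when $L_g$ is compact, I would note that $-(-g)^{1-\varepsilon}$ is a bounded-from-above plurisubharmonic function whose only bad locus is still $L_g$, and that $-(-g_j)^{1-\varepsilon}$ is a decreasing sequence of smooth plurisubharmonic functions converging to it in $\mathcal C^1(\Omega\setminus L_g)$. By Lemma \ref{compact}(1) applied to the function $-(-g)^{1-\varepsilon}$ in place of $g$, the current $-(-g)^{1-\varepsilon}\ddc T$ is well defined and $\widetilde{\ddc(-(-g)^{1-\varepsilon})\wedge T}$ exists; in particular, from the computation \eqref{dg:dcg:1} and Lemma \ref{dg:dcg} the masses of $\dfrac{\dd g\wedge\dd^c g}{(-g)^{1+\varepsilon}}\wedge T$ and of $\dfrac{\ddc g}{(-g)^\varepsilon}\wedge T$ are locally finite across $L_g$. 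Then for a test form one expands $\ddc\big((-(-g_j)^{1-\varepsilon})f\big)\wedge T\wedge\beta^{p-1}$ as in \eqref{ddcfg}–\eqref{fddcg}, and the uniform bounds on all the pieces, together with monotone convergence for the term without derivatives on $g$, force convergence of $-(-g_j)^{1-\varepsilon}T$ to a limit independent of the approximating sequence. This is essentially a rerun of the proof of Lemma \ref{compact}(1) with the auxiliary potential.

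For the second and harder claim, suppose in addition $L_g=P_g$ and $p\ge 2$. The idea is to write $g=-(-g)^{1-\varepsilon}\cdot(-g)^{\varepsilon}$ and to exploit that, because $L_g=P_g$, the function $(-g)^{\varepsilon}$ blows up to $+\infty$ exactly on the pole set; equivalently, $e^{\varepsilon g}$ (or a suitable bounded modification) vanishes on $P_g$. More directly, I would run the same weak-$^*$ compactness argument as in part (2) of Lemma \ref{compact}: by Banach–Alaoglu the sequence $(g_jT)$ has cluster values, and the difference $F$ of two such cluster values — or the difference of a cluster value and the trivial extension $\widetilde{gT}$ away from $L_g$ — is a current supported in $L_g=P_g$ of bi-dimension $(p,p)$. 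One then checks $\ddc F\ge 0$ using that $g\ddc T$ is already well defined (from Lemma \ref{compact}(1)) and that the mixed terms $\dd g_j\wedge\dd^c T$ converge: here the mass control from Lemma \ref{dg:dcg}, via Cauchy–Schwarz $\lvert\dd g_j\wedge\dd^c\psi\wedge T\rvert\le \big(\tfrac{\dd g_j\wedge\dd^c g_j}{(-g_j)^{1+\varepsilon}}\wedge T\big)^{1/2}\big((-g_j)^{1+\varepsilon}\dd\psi\wedge\dd^c\psi\wedge T\big)^{1/2}$, is what makes the cross terms behave. With $F\ge 0$, $\ddc F\ge 0$, $F$ of bi-dimension $(p,p)$ with $p\ge 2$, and $\operatorname{Supp}F\subset P_g$ a complete pluripolar set, one invokes the support theorem for plurisubharmonic currents on complete pluripolar sets (the same result from \cite{Da-Kh-El} used in Lemma \ref{compact}(2), now in bi-dimension $(p,p)$ rather than $(p-1,p-1)$) to conclude $F\equiv 0$, hence the limit is unique and $gT$ is well defined.

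The main obstacle I anticipate is exactly the vanishing statement $F\equiv 0$ for a positive plurisubharmonic current of bi-dimension $(p,p)$, $p\ge2$, supported in the complete pluripolar set $P_g$: unlike in Lemma \ref{compact}(2), $F$ need not be compactly supported after multiplication by $\beta^{p-1}$ unless one first localizes, and one must be sure the cited support/polar-set theorem applies in this bi-dimension and under only "$\ddc F\ge0$" rather than "$\dd F=0$". A secondary difficulty is justifying that the cross terms $\dd g_j\wedge\dd^c f\wedge T$ and $\dd f\wedge\dd^c g_j\wedge T$ not only stay bounded but actually converge (not merely along a subsequence) to the expected limits; this is where the hypothesis $L_g=P_g$ enters decisively, since on $P_g$ the Lelong-type estimates controlling $\dd g_j\wedge\dd^c g_j\wedge T$ are uniform, whereas on a general $L_g\supsetneq P_g$ they could fail. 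I would therefore spend most of the write-up on these two points, treating the rest as a careful but routine adaptation of the arguments in Lemma \ref{compact} and Lemma \ref{dg:dcg}.
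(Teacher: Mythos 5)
Your plan misses the mechanism that actually produces the mass bound on $|g|^{1-\varepsilon}T$, which is the whole content of the first assertion. Rerunning the expansion (\ref{ddcfg})--(\ref{fddcg}) with $u_j=-(-g_j)^{1-\varepsilon}$ in place of $g_j$ only bounds $\int f\,\ddc u_j\wedge T\wedge\beta^{p-1}$ and $\int f u_j\,\ddc T\wedge\beta^{p-1}$ (i.e.\ it re-proves Lemma \ref{compact}(1) for the auxiliary potential): no term of that identity is $u_jT\wedge\beta^{p}$, so it says nothing about the mass of $u_jT$ itself, and monotone convergence alone could give $+\infty$. The paper's proof gets the missing term by applying Stokes to $\ddc(-u_jf|z|^{2})\wedge T\wedge\beta^{p-1}$, so that $\ddc|z|^{2}=\beta$ produces $\mathcal{I}_j=\int -u_jfT\wedge\beta^{p}$ (see (\ref{betap})), controls the cross term $\int f\,\dd u_j\wedge\dd^{c}|z|^{2}\wedge T\wedge\beta^{p-1}$ by Cauchy--Schwarz against the quantity of Lemma \ref{dg:dcg}, and closes the loop with the self-improving inequality $\mathcal{I}_j\le \mathcal{M}_j+\mathcal{A}_j\mathcal{I}_j^{1/2}$, where the unknown mass reappears on the right with exponent $\tfrac12$. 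Without this weighted-Stokes and bootstrap step your part (1) does not go through.

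For the second assertion your route is also off target. Banach--Alaoglu cannot be applied to $(g_jT)$ before a uniform local mass bound near $L_g$ is established -- but that bound is exactly what must be proved (uniqueness of the limit is free, since $-g_jT\wedge\beta^{p}$ increases monotonically); so the cluster-value/support-theorem scheme addresses a non-issue while presupposing the real one. Moreover your Cauchy--Schwarz split $\bigl(\frac{\dd g_j\wedge\dd^{c}g_j}{(-g_j)^{1+\varepsilon}}\wedge T\bigr)^{1/2}\bigl((-g_j)^{1+\varepsilon}\dd\psi\wedge\dd^{c}\psi\wedge T\bigr)^{1/2}$ requires control of $|g|^{1+\varepsilon}T$, which is more singular than $gT$ and is only available for pluriharmonic $T$ (Theorem \ref{g:1+eps}); the argument becomes circular. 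The paper uses $L_g=P_g$ and $p\ge 2$ quite differently: by Lemma \ref{compact}(2) (complete pluripolarity of $P_g$) the current $S=-\ddc(-g)^{1-\varepsilon}\wedge T$ is a well-defined positive plurisubharmonic current of bi-dimension $(p-1,p-1)$; applying the already-proved first part to $S$ yields that $\frac{\dd g\wedge\dd^{c}g}{(-g)^{2\varepsilon}}\wedge T$ has locally finite mass, and this improved gradient estimate lets one redo the weighted-Stokes argument of (\ref{betap}) with $g_j$ itself in place of $u_j$, the cross term now being bounded by $\bigl(\frac{\dd g_j\wedge\dd^{c}g_j}{(-g_j)^{\varepsilon}}\wedge T\bigr)^{1/2}\bigl((-g_j)^{\varepsilon}T\wedge\beta^{p}\bigr)^{1/2}$, both factors uniformly bounded. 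You would need to incorporate this decomposition (or an equivalent substitute) for the second claim.
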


\begin{proof}
Take $W$, $W^{'}$, $f$ and $g_{j}$ as in the proof of Lemma \ref{compact}, and for $0<\varepsilon<1$ set $u_{j}=-(-g_{j})^{1-\varepsilon}$, $j \in \nat$. Observe that $(u_{j})_{j}$ is a sequence of negative plurisubharmonic functions where 
\begin{equation}
\dd u_{j}= (1-\varepsilon)\frac{\dd g_{j}}{(-g_{j})^{\varepsilon}}.
\end{equation}
Clearly we have
\begin{equation}
\int_{W^{'}} \ddc (-u_{j}f |z|^{2}) \wedge T \wedge \beta^{p-1}=\int_{W^{'}}  -u_{j}f |z|^{2} \ddc T \wedge \beta^{p-1}
\end{equation} 
Therefore,
\begin{equation}\label{betap}
\begin{split}
\mathcal{I}_{j}:= \int_{W^{'}} -u_{j}f T \wedge \beta^{p}& \leq \int_{W^{'}}  -u_{j}f |z|^{2} \ddc T \wedge \beta^{p-1}\\
& \ + \int_{W^{'}} f |z|^{2}  \ddc u_{j} \wedge T \wedge \beta^{p-1} \\
&  \ +2\left\vert \int_{W^{'}} f \dd u_{j} \wedge \dd^{c}|z|^{2} \wedge  T \wedge \beta^{p-1} \right\vert + \vert \mathcal{O}^{'}(f)\vert ,
\end{split}
\end{equation} 
where $\mathcal{O}^{'}(f)$ consists of all terms involving $\dd f, \ \dd^{c}f$ and $\ddc f$. The first two line integrals of the right hand side of (\ref{betap}) are uniformly bounded, thanks to Lemma \ref{compact}. Furthermore, Cauchy-Schwartz inequality shows that 

\begin{equation}\label{uj:T}
\begin{split}
&\left\vert \int_{W^{'}} f \dd u_{j}  \wedge \dd^{c}|z|^{2}\wedge  T \wedge \beta^{p-1} \right\vert \\
& \leq \left\vert \int_{W^{'}} \frac{f}{-\delta u_{j}} \dd u_{j} \wedge \dd^{c} u_{j} \wedge T \wedge \beta^{p-1} \right\vert^{\frac{1}{2}} \times  \left\vert \int_{W^{'}}  (-\delta u_{j}) f \dd |z|^{2} \wedge \dd^{c} |z|^{2}\wedge  T \wedge \beta^{p-1} \right\vert^{\frac{1}{2}}\\
& \leq \left\vert \int_{W^{'}} \frac{f}{\delta (-g_{j})^{1+\varepsilon}} \dd g_{j} \wedge \dd^{c} g_{j} \wedge T \wedge \beta^{p-1} \right\vert^{\frac{1}{2}} \times \left\vert \int_{W^{'}}  -f u_{j}   T \wedge \beta^{p} \right\vert^{\frac{1}{2}},
\end{split}
\end{equation}
where $\delta$ is a positive constant chosen so that $\delta \dd |z|^{2} \wedge \dd^{c} |z|^{2} \leq \ddc |z|^{2}$. By complying the last two inequalities, we have
\begin{equation}
\mathcal{I}_{j}\leq \mathcal{M}_{j}+ \mathcal{A}_{j} \times \mathcal{I}^{\frac{1}{2}}_{j}.
\end{equation}
And as $\mathcal{M}_{j}$ together with $\mathcal{A}_{j}$ are uniformly bounded, one can conclude the definition of $|g|^{1-\varepsilon}T$. Suppose now that $p\geq 2$ and $L_g=P_g$. The current $-T \wedge \ddc (-g)^{1-\varepsilon}$ is positive and plurisubharmonic on $\Omega$. Hence, the precedent part guarantees the existence of the trivial extension of  $-|g|^{1-\varepsilon} T \wedge \ddc (-g)^{1-\varepsilon}$, and obviously the current $\displaystyle{\frac{\dd g \wedge \dd^{c}g}{(-g)^{2\varepsilon}}\wedge T}$ is of locally finite mass. Notice also that 
\begin{equation}
\begin{split}
&\lim_{j \to \infty}\left\vert \int_{W^{'}} f \dd g_{j}  \wedge \dd^{c}|z|^{2}\wedge  T \wedge \beta^{p-1} \right\vert \\
& \leq \lim_{j \to \infty} \left\vert \int_{W^{'}} \frac{f}{\delta(- g_{j})^{\varepsilon}} \dd g_{j} \wedge \dd^{c} g_{j} \wedge T \wedge \beta^{p-1} \right\vert^{\frac{1}{2}} \times  \left\vert \int_{W^{'}}  \delta f(-g_{j})^{\varepsilon}   T \wedge \beta^{p} \right\vert^{\frac{1}{2}}\\
& < \infty.
\end{split}
\end{equation}
Therefore, by similar argument as above, one can replace $u_{j}$ by $g_{j}$ in (\ref{betap}) and deduce that $g_{j}T$ converges to a current denoted by $gT$. 
\end{proof}

\begin{thm}\label{g:1+eps}
 Let $T$ be a positive pluriharmonic current of bi-dimension $(p,p), \ p \geq 2$ on  $\Omega $ and $g \in Psh^{-}(\Omega)\cap \mathcal{C}^{1}(\Omega\setminus L_{g})$. If $L_{g}$ is compact and $L_g=P_g$, then for all $0< \varepsilon <1$ the current $\vert g \vert^{1+\varepsilon}T$ is well defined.
\end{thm}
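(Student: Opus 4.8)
The plan is to mimic the proof of the second part of Theorem \ref{p geq 2} (the one giving $gT$ well defined), now with the weights $v_j:=(-g_j)^{1+\varepsilon}$ — which are no longer plurisubharmonic — playing the role of the auxiliary $-u_j$ there. The role of the pluriharmonicity $\ddc T=0$ is to annihilate the term involving $\ddc T$ in the basic integration‑by‑parts identity: this is precisely the term that carries infinite mass for a merely plurisubharmonic $T$ (it is what makes the example \eqref{gT:exmp} fail) and that confines the exponent to $(0,1)$ in Theorem \ref{p geq 2}.

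Concretely, I would fix $W\Subset W'\Subset\Omega$ and $f\in\mathcal{C}^\infty_0(W')$ with $f\equiv1$ near $W\supset L_g$. Since $(-g_j)^{1+\varepsilon}\uparrow(-g)^{1+\varepsilon}$ and $T\ge0$, the positive currents $(-g_j)^{1+\varepsilon}T$ increase, so by monotone convergence it suffices to bound $\mathcal{J}_j:=\int_{W'}f\,(-g_j)^{1+\varepsilon}\,T\wedge\beta^{p}$ uniformly in $j$. Because $\ddc T=0$, the $W'$-integral of $\ddc(v_jf|z|^2)\wedge T\wedge\beta^{p-1}$ vanishes; expanding it and collecting the term $v_j f\,\ddc|z|^2$ produces an identity of the shape
\[
\mathcal{J}_j=(1+\varepsilon)\!\int_{W'}\!f|z|^2(-g_j)^{\varepsilon}\,\ddc g_j\wedge T\wedge\beta^{p-1}-\varepsilon(1+\varepsilon)\!\int_{W'}\!f|z|^2\,\frac{\dd g_j\wedge\dd^c g_j}{(-g_j)^{1-\varepsilon}}\wedge T\wedge\beta^{p-1}+\mathcal{R}_j ,
\]
where $\mathcal{R}_j$ collects the cross term $\sim(1+\varepsilon)(-g_j)^{\varepsilon}\dd g_j\wedge\dd^c|z|^2$ and all terms carrying $\dd f,\dd^c f$ or $\ddc f$. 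The terms with derivatives of $f$ are supported off $L_g$, where $g_j\to g$ in $\mathcal{C}^1$, hence are uniformly bounded; the middle term is $\le0$ and is discarded; and, splitting $(-g_j)^{\varepsilon}=(-g_j)^{(\varepsilon-1)/2}(-g_j)^{(1+\varepsilon)/2}$ and using Cauchy--Schwarz exactly as in \eqref{uj:T}, the cross term is bounded in absolute value by $C\big(\int_{W'}f\,\tfrac{\dd g_j\wedge\dd^c g_j}{(-g_j)^{1-\varepsilon}}\wedge T\wedge\beta^{p-1}\big)^{1/2}\,\mathcal{J}_j^{1/2}$.

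It therefore remains to bound, uniformly across $L_g$, the masses of $\dfrac{\dd g\wedge\dd^c g}{(-g)^{1-\varepsilon}}\wedge T$ and of $(-g)^{\varepsilon}\,\ddc g\wedge T$, and both are furnished by Theorem \ref{p geq 2} applied to $g$ itself (its hypotheses $L_g$ compact, $L_g=P_g$, $p\ge2$, $T$ positive plurisubharmonic being all in force). Indeed, as in that proof the trivial extension of $-|g|^{1-s}T\wedge\ddc(-g)^{1-s}=T\wedge\big(\tfrac{\dd g\wedge\dd^c g}{(-g)^{2s}}+\tfrac{\ddc g}{(-g)^{2s-1}}\big)$ exists for each $s\in(0,1)$, so both positive summands — $\tfrac{\dd g\wedge\dd^c g}{(-g)^{2s}}\wedge T$ and $(-g)^{1-2s}\ddc g\wedge T$ — have locally finite mass; choosing $s=\tfrac{1-\varepsilon}{2}\in(0,1)$ gives exactly the two currents needed. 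Carrying out the same integration‑by‑parts estimates on the smooth $g_j$ (where $\ddc T=0$ again removes the $\ddc T$-term) makes these bounds uniform in $j$. Inserting them into the displayed identity gives $\mathcal{J}_j\le M+A\,\mathcal{J}_j^{1/2}$ with $M,A$ independent of $j$, whence $\sup_j\mathcal{J}_j<\infty$; then $(-g_j)^{1+\varepsilon}T$ converges by monotone convergence to a current, independent of the approximating sequence by the usual comparison, and $\vert g\vert^{1+\varepsilon}T$ is well defined.

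The step I expect to be the real obstacle is the uniform (in $j$) control of these two auxiliary masses: the finite‑mass facts extracted from Theorem \ref{p geq 2} are in the first instance statements about the limiting currents, and closing the recursion requires re‑running their proofs — and those behind Lemma \ref{compact} — for the smooth approximants $g_j$. This is also exactly where pluriharmonicity is used essentially, since for a general positive plurisubharmonic $T$ the extra term $\int_{W'}f|z|^2(-g_j)^{1+\varepsilon}\,\ddc T\wedge\beta^{p-1}$ in the basic identity need not stay bounded.
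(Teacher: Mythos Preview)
Your proposal is correct and follows essentially the same route as the paper: set $v_j=(-g_j)^{1+\varepsilon}$, use $\ddc T=0$ to kill the $\ddc T$-term in the identity $\int_{W'}\ddc(v_jf|z|^2)\wedge T\wedge\beta^{p-1}=0$, bound the cross term by Cauchy--Schwarz exactly as you do (the paper writes it as $\frac{\dd v_j\wedge\dd^c v_j}{v_j}$, which is your $\frac{\dd g_j\wedge\dd^c g_j}{(-g_j)^{1-\varepsilon}}$ up to a constant), and feed in the two auxiliary mass bounds coming from the second part of Theorem~\ref{p geq 2} to reach $\mathcal{J}_j\le M+A\,\mathcal{J}_j^{1/2}$. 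The concern you flag about uniformity in $j$ is legitimate but is indeed resolved, as you say, by re-running the proofs of Lemma~\ref{compact} and Theorem~\ref{p geq 2} on the smooth approximants---the paper simply takes this for granted.
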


\begin{proof}
In virtue of the precedent argument, the currents $(-g)^{\varepsilon}\ddc g \wedge T$ and $\displaystyle{\frac{\dd g \wedge \dd^{c}g}{(-g)^{1-\varepsilon}}\wedge T}$ are of locally finite mass across $L_g$. On the other hand
\begin{equation}
\ddc (-g)^{1+\varepsilon}\wedge T=\varepsilon (1+\varepsilon) \frac{\dd g \wedge \dd^{c}g}{(-g)^{1-\varepsilon}} \wedge T- (1+\varepsilon) (-g)^{\varepsilon}\ddc g \wedge T.
\end{equation}
This implies that the trivial extension $\xtnd{\ddc (-g)^{1+\varepsilon} \wedge T}$ exists. Set $v_{j}=(-g_{j})^{1+\varepsilon}$, $j \in \nat$. By the features of $T$ we have 
\begin{equation}
\int_{W^{'}} \ddc (v_{j}f |z|^{2}) T \wedge \beta^{p-1}=0
\end{equation} 
Now, by analogous discussion as in the previous proof, we infer the definition of $(-g)^{1+\varepsilon} T$ since
\begin{equation}
\begin{split}
 \int_{W^{'}}& v_{j}f T \wedge \beta^{p}\\
& \leq \left\vert \int_{W^{'}} f |z|^{2}  \ddc v_{j} \wedge T \wedge \beta^{p-1} \right\vert + \vert \mathcal{O}^{'}(f)\vert\\
&  \ +2 \left\vert \int_{W^{'}} \frac{f}{\delta v_{j}} \dd v_{j} \wedge \dd^{c} v_{j} \wedge T \wedge \beta^{p-1} \right\vert^{\frac{1}{2}} \times \left\vert \int_{W^{'}}v_{j}  f    T \wedge \beta^{p} \right\vert^{\frac{1}{2}}.
\end{split}
\end{equation} 
\end{proof}

\begin{lem}\label{single}
 Let $T$ be a positive plurisubharmonic current of bi-dimension $(p,p), \ p \geq 1$ on  $\Omega $ and $g \in Psh^{-}(\Omega)\cap \mathcal{C}^{1}(\Omega\setminus L_{g})$. If $L_{g}$ is a single point, then $gT$ is well defined.
\end{lem}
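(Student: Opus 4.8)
The plan is to reduce to the case already handled by Theorem \ref{p geq 2} by absorbing the obstruction $L_g=P_g$: when $L_g=\{z_0\}$ is a single point it is automatically compact, so the first conclusion of Theorem \ref{p geq 2} gives that $|g|^{1-\varepsilon}T$ is well defined for every $0<\varepsilon<1$, and Lemma \ref{compact}(3) gives that $\ddc g\wedge T$ is a well defined current (the single-point case is exactly the third statement of that lemma). What remains is to upgrade from $|g|^{1-\varepsilon}$ to $|g|$ itself, and here the point is that a single point $\{z_0\}$ \emph{is} complete pluripolar, so the hypothesis ``$L_g=P_g$'' in Theorem \ref{p geq 2} is superfluous: one may replace $g$ by $\max(g,-C\log(1/\|z-z_0\|^2)-C')$-type comparison, or more directly note that the pole set issue only entered through needing $\{z_0\}$ pluripolar, which holds. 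So I would first argue that the second half of the proof of Theorem \ref{p geq 2} applies verbatim with $L_g=\{z_0\}$ in place of the assumption $L_g=P_g$, since the only use of $L_g=P_g$ there was to invoke Lemma \ref{compact}(2)/(3)-type extension results, and part (3) of Lemma \ref{compact} covers the single-point case without any pluripolarity of the \emph{pole} set.

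Concretely, I would run the same Cauchy--Schwarz machine. Take $W\Subset W'\Subset\Omega$ neighborhoods of $z_0$, a cutoff $f\in\mathcal{C}^\infty_0(W')$ with $f\equiv 1$ near $W$, and the decreasing smooth approximants $g_j\downarrow g$. Starting from $\int_{W'}\ddc(-g_j f\|z\|^2)\wedge T\wedge\beta^{p-1}=\int_{W'}(-g_j f\|z\|^2)\ddc T\wedge\beta^{p-1}$, expand and bound
\begin{equation*}
\mathcal{I}_j:=\int_{W'}(-g_j)fT\wedge\beta^{p}\leq \mathcal{M}_j+\mathcal{A}_j\cdot\mathcal{I}_j^{1/2},
\end{equation*}
where $\mathcal{M}_j$ collects the terms $\int f\|z\|^2(-g_j)\ddc T\wedge\beta^{p-1}$, $\int f\|z\|^2\ddc g_j\wedge T\wedge\beta^{p-1}$ and the $\mathcal{O}'(f)$ cutoff terms, and $\mathcal{A}_j^2$ is a constant times $\int_{W'}\frac{f}{\delta(-g_j)}\dd g_j\wedge\dd^c g_j\wedge T\wedge\beta^{p-1}$ via Cauchy--Schwarz applied to $\dd g_j\wedge\dd^c\|z\|^2$. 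The term $\int f\|z\|^2\ddc g_j\wedge T\wedge\beta^{p-1}$ is uniformly bounded by Lemma \ref{compact}(3) (the single-point extension of $\ddc g\wedge T$), and $(-g_j)\ddc T$ is uniformly bounded by the monotone convergence part of Lemma \ref{compact}(1); the uniform bound on $\mathcal{A}_j$ is the content of Lemma \ref{dg:dcg} with $\varepsilon=0$, which needs a separate justification in the single-point case.

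The main obstacle is precisely that last uniform bound, $\sup_j\int_{W'}\frac{f}{-g_j}\dd g_j\wedge\dd^c g_j\wedge T\wedge\beta^{p-1}<\infty$: Lemma \ref{dg:dcg} only gives it with $(-g)^{1+\varepsilon}$ in the denominator, $\varepsilon>0$, which is not enough here. To get the borderline $\varepsilon=0$ estimate I would exploit that $L_g$ is a single point: after a translation assume $z_0=0$, and compare $g$ with $h(z)=\log\|z\|^2$ on a small ball, i.e. use that $g\leq -M$ on a neighborhood together with the sub-mean-value/comparison for plurisubharmonic functions to dominate $\frac{\dd g_j\wedge\dd^c g_j}{-g_j}\wedge T$ on annuli $\{2^{-k-1}<\|z\|<2^{-k}\}$ and sum a geometric-type series in $k$ — the single-point support of the singularity is what makes the annular masses summable, whereas for a larger $L_g$ they need not be. Once this borderline Lemma \ref{dg:dcg}-type bound is in hand, the inequality $\mathcal{I}_j\leq\mathcal{M}_j+\mathcal{A}_j\mathcal{I}_j^{1/2}$ forces $\sup_j\mathcal{I}_j<\infty$, hence $\sup_j\|g_j T\|_{W}<\infty$; a Banach--Alaoglu argument extracts a weak$^*$ limit, and independence of the limit from the chosen subsequence follows exactly as in the proof of Lemma \ref{compact}(3), by evaluating the Dirac-type defect $\big(S-\xtnd{gT}\big)\wedge\beta^{p-1}$ against $f$ and checking the resulting constant is expressed purely through the (already well-defined) currents $g\ddc T$, $\ddc g\wedge T$ and $\dd g\wedge\dd^c f\wedge T$. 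This yields that $g_jT\to gT$ weakly$^*$, as claimed.
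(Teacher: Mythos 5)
Your plan hinges on the borderline estimate $\sup_j\int_{W'}\frac{f}{-g_j}\,\dd g_j\wedge\dd^{c}g_j\wedge T\wedge\beta^{p-1}<\infty$, i.e. Lemma \ref{dg:dcg} with $\varepsilon=0$, and this is exactly where the proposal breaks down: that estimate is false in general, even when $L_g$ is a single point and $T$ is smooth, positive and closed. Take $n=p=1$ (the lemma allows $p=1$), $T\equiv 1$ and $g=\log|z|^{2}$: the form $\dd g\wedge\dd^{c}g/(-g)$ has density comparable to $\bigl(|z|^{2}\log(1/|z|^{2})\bigr)^{-1}$, whose integral over $B(0,\frac12)$ diverges (radially, $\int_0^{1/2}\frac{\dd r}{r\log(1/r)}=\infty$), and since $g_j\to g$ in $\mathcal{C}^{1}$ off the origin, Fatou forces $\mathcal{A}_j\to\infty$ along any decreasing smooth approximation --- even though $gT$ is trivially well defined here. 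So no comparison with $\log\|z-z_0\|^{2}$ and no summation over annuli can deliver the bound; the isolated singularity does not make those annular masses summable, and once $\mathcal{A}_j$ is unbounded the inequality $\mathcal{I}_j\le\mathcal{M}_j+\mathcal{A}_j\mathcal{I}_j^{1/2}$ yields nothing. A secondary flaw in the reduction: the second half of the proof of Theorem \ref{p geq 2} cannot be applied ``verbatim'', because it uses the first half on the bidimension $(p-1,p-1)$ current $-T\wedge\ddc(-g)^{1-\varepsilon}$ and therefore needs $p\ge 2$, while Lemma \ref{single} is stated for all $p\ge 1$.

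The paper's proof avoids the gradient term $\dd g_j\wedge\dd^{c}g_j$ altogether. It tests against $\chi|z|^{2}$ and expands $\ddc(g_jT)=\ddc g_j\wedge T+g_j\ddc T+2\,\dd g_j\wedge\dd^{c}T$, so the only term not already controlled by Lemma \ref{compact} is the cross term $\chi|z|^{2}\,\dd g_j\wedge\dd^{c}T$ (note: $\dd^{c}T$, not $\dd^{c}g_j$). This is bounded by a radial Fubini--Stokes argument that exploits the isolated point: one writes $\int-\chi g_j\,\dd|z|^{2}\wedge\dd^{c}T\wedge\beta^{p-1}$ as $\int_0^{1/2}\dd t\int_{\{|z|=t\}}-\chi g_j\,\dd^{c}T\wedge\beta^{p-1}$ and converts each sphere integral by Stokes into $\int_{\{|z|\le t\}}\bigl[\dd(-\chi g_j)\wedge\dd^{c}T-\chi g_j\,\ddc T\bigr]\wedge\beta^{p-1}$, which is uniformly bounded because $g\ddc T$ exists (Lemma \ref{compact}) and because $\dd\chi$, $\dd^{c}\chi$, $\ddc\chi$ are supported away from the singular point, where $g$ is bounded; this gives the key bound (\ref{chi:dg}), hence a uniform mass bound for $\chi g_jT$, and monotone convergence of $g_jT$ (no Banach--Alaoglu or subsequence-independence argument is needed, since $g_jT$ decreases against positive test forms) concludes. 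This control of $\dd g_j\wedge\dd^{c}T$ is the missing idea your write-up would have to supply in place of the false borderline estimate.
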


\begin{proof}
Without loss of generality, one can assume that $\Omega$ is the unit ball and $L_g$ is the origin. Take $\chi \in \mathcal{C}^{\infty}_{0}(B(0,\frac{1}{2}))$ so that $\chi=1$ on  a neighborhood of $B(0,\frac{1}{4})$. First notice that for all $0<t<1$ we have
\begin{equation}\label{chi:gj}
\lim_{j \to \infty} \left\vert \int_{\{ |z| \leq t\}} \dd (- \chi g_{j}) \wedge \dd^{c}T \wedge \beta^{p-1}\right\vert = - \int_{\{ |z| \leq t\}} \chi g \ddc T \wedge \beta^{p-1}.
\end{equation}
But 
\begin{equation}
\begin{split}
&\lim_{j \to \infty} \left\vert \int_{\{ |z| \leq \frac{1}{2}\}} - \chi g_{j} \dd |z|^{2} \wedge \dd^{c}T \wedge \beta^{p-1} \right\vert\\ &=\lim_{j \to \infty}  \left\vert \int_{0}^{\frac{1}{2}}\dd t \int_{\{ |z|=t \}} - \chi g_{j} \dd^{c}T \wedge \beta^{p-1} \right\vert\\
&=\lim_{j \to \infty}  \left\vert \int_{0}^{\frac{1}{2}}\dd t \int_{\{ |z|\leq t \}} [\dd (- \chi g_{j}) \wedge \dd^{c}T- \chi g_{j}  \ddc T] \wedge \beta^{p-1} \right\vert< \infty.
\end{split}
\end{equation}
Therefore, 
\begin{equation}
\sup_{j}\left\vert  \int_{\{ |z| \leq \frac{1}{2}\}} -g_{j} \dd (\chi|z|^{2}) \wedge \dd^{c} T \wedge \beta^{p-1} \right\vert < \infty.
\end{equation}
By a simple computation, one finds that
\begin{equation}
\chi |z|^{2}\dd g_{j}\wedge \dd^{c}T=  g_{j} \dd (\chi|z|^{2}) \wedge \dd^{c} T +  g_{j} \chi|z|^{2} \ddc T.
\end{equation}
Hence, 
\begin{equation}\label{chi:dg}
\sup_{j}\left\vert  \int_{\{ |z| \leq \frac{1}{2}\}}-\chi |z|^{2}\dd g_{j}\wedge \dd^{c} T \wedge \beta^{p-1} \right\vert < \infty.
\end{equation}
Now, once again Stokes' formula shows that
\begin{equation}\label{ddc(gT1)}
\int_{\{ |z| \leq \frac{1}{2}\}} -\ddc (\chi \vert z \vert^{2}) \wedge g_{j} T  \wedge \beta^{p-1}= \int_{\{ |z| \leq \frac{1}{2}\}} - \chi \vert z \vert^{2}  \ddc (g_{j}T)\wedge \beta^{p-1}.
\end{equation}
But
\begin{equation}
\begin{split}
\int_{\{ |z| \leq \frac{1}{2}\}} -\chi  \vert z \vert^{2} \ddc (g_{j}T)\wedge \beta^{p-1}&= \int_{\{ |z| \leq \frac{1}{2}\}} -\chi  \vert z \vert^{2}  \ddc g_{j}\wedge T\wedge \beta^{p-1} \\ & \ \ + \int_{\{ |z| \leq \frac{1}{2}\}} -\chi  \vert z \vert^{2} g_{j} \ddc T\wedge \beta^{p-1} \\
& \ \ + 2 \int_{\{ |z| \leq \frac{1}{2}\}} -\chi  \vert z \vert^{2}  \dd g_{j}\wedge \dd^{c} T\wedge \beta^{p-1}.
\end{split}
\end{equation}
Thus, we infer that
\begin{equation}
\begin{split}
\sup_{j} \left\vert \int_{\{ |z| \leq \frac{1}{2}\}} -\chi  \vert z \vert^{2} \ddc (g_{j}T)\wedge \beta^{p-1} \right\vert < \infty, 
\end{split}
\end{equation}
thanks to Lemma \ref{compact} and (\ref{chi:dg}). On the other hand, the left hand side of (\ref{ddc(gT1)}) involves $\mathcal{O}(\chi)$ which consists of the terms where $\dd \chi$,  $\dd^{c}\chi$ and $\ddc \chi$ appear. Notice that $\mathcal{O}(\chi)$ is under control since the origin is isolated. Hence,   
\begin{equation}
\begin{split}
- \int_{\{ |z| \leq \frac{1}{4}\}} g_{j}  T  \wedge \beta^{p} &\leq - \int_{\{ |z| \leq \frac{1}{2}\}} \chi  g_{j}  T  \wedge \beta^{p} \\ & \leq \vert \mathcal{O}(\chi ) \vert + \left\vert  \int_{\{ |z| \leq \frac{1}{2}\}} -\chi  \vert z \vert^{2} \ddc (g_{j}T) \wedge \beta^{p-1} \right\vert < \infty.
\end{split}
\end{equation} 
Clearly, the current $gT$ is obtained by the monotone convergence of $g_{j}T$.
\end{proof}

\begin{thm}\label{1st:main}
Let $T$ be a positive plurisubharmonic current of bi-dimension $(p,p), \ p \geq 1$ on  $\Omega $ and $g \in Psh^{-}(\Omega)\cap \mathcal{C}^{1}(\Omega\setminus L_{g})$. If the current $g\dd^{c} T$ is well defined and $L_{g}$ is compact, then $ g T$ is a well defined current on $\Omega$.
\end{thm}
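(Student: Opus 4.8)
The plan is to reproduce the scheme of the proof of Lemma~\ref{single}, replacing the integration over spheres there---available only because $L_{g}$ was a single point---by the standing hypothesis that $g\,\dd^{c}T$ is well defined. First I would fix $W$, $W'$, $f$ and the sequence $g_{j}$ as in the proof of Lemma~\ref{compact}: $W\Subset W'\Subset\Omega$ are neighbourhoods of the compact set $L_{g}$, and $f\in\mathcal{C}^{\infty}_{0}(W')$ equals $1$ on a neighbourhood of $W$. Replacing $g$ by $g-c$ with $c:=\max\bigl(0,\sup_{\overline{W'}}g_{1}\bigr)$---which keeps $g$ negative and plurisubharmonic and changes $gT$, $g\,\dd^{c}T$ only by the fixed currents $cT$, $c\,\dd^{c}T$---one may assume $g_{j}\le0$ on $W'$. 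Since $g_{j}\to g$ in $\mathcal{C}^{1}$ on compact subsets of $\Omega\setminus L_{g}$ and $T$ has locally finite mass, $g_{j}T$ already converges weakly$^{*}$ there, so everything reduces to a uniform bound $\sup_{j}\int_{W}(-g_{j})\,T\wedge\beta^{p}<\infty$.

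For that bound I would test $\ddc(g_{j}T)$ against $f|z|^{2}$ by Stokes,
\begin{equation*}
\int_{W'}\ddc(f|z|^{2})\wedge g_{j}T\wedge\beta^{p-1}=\int_{W'}f|z|^{2}\,\ddc(g_{j}T)\wedge\beta^{p-1},
\end{equation*}
and expand, exactly as in Lemma~\ref{single}, $\ddc(f|z|^{2})=f\beta+\mathcal{O}(f)$ and $\ddc(g_{j}T)=\ddc g_{j}\wedge T+2\,\dd g_{j}\wedge\dd^{c}T+g_{j}\,\ddc T$, where $\mathcal{O}(f)$ carries only $\dd f,\dd^{c}f,\ddc f$, supported in $W'\setminus W$. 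The $\mathcal{O}(f)$--contribution is uniformly bounded because $g_{j}\to g$ in $\mathcal{C}^{1}$ away from $L_{g}$ and $T$ has finite mass on $\mathrm{Supp}\,f$. Of the three remaining integrals, the first satisfies $\int_{W'}f|z|^{2}\,\ddc g_{j}\wedge T\wedge\beta^{p-1}\le R^{2}\int_{W'}f\,\ddc g_{j}\wedge T\wedge\beta^{p-1}$ (where $|z|^{2}\le R^{2}$ on $W'$), which is bounded by inequality~(\ref{fddcg}) since $\int_{W'}fg_{j}\,\ddc T\wedge\beta^{p-1}$ decreases in $j$, stays $\le0$, and remains $\ge\int_{W'}fg\,\ddc T\wedge\beta^{p-1}>-\infty$ by Lemma~\ref{compact}(1); the third, $\int_{W'}f|z|^{2}g_{j}\,\ddc T\wedge\beta^{p-1}$, is bounded for the same reason, being decreasing in $j$ with finite limit. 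Only the second integral requires the hypothesis.

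Indeed, for $\int_{W'}f|z|^{2}\,\dd g_{j}\wedge\dd^{c}T\wedge\beta^{p-1}$ I would write $\dd g_{j}\wedge\dd^{c}T=\dd(g_{j}\,\dd^{c}T)-g_{j}\,\ddc T$ and integrate by parts once more, obtaining
\begin{equation*}
\int_{W'}f|z|^{2}\,\dd g_{j}\wedge\dd^{c}T\wedge\beta^{p-1}=-\int_{W'}\bigl(f\,\dd|z|^{2}+|z|^{2}\,\dd f\bigr)\wedge g_{j}\,\dd^{c}T\wedge\beta^{p-1}-\int_{W'}f|z|^{2}g_{j}\,\ddc T\wedge\beta^{p-1}.
\end{equation*}
The last term is the (bounded) third integral above. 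In the first, the piece carrying $\dd f$ is supported away from $L_{g}$, where $g_{j}\,\dd^{c}T$ converges since $\dd^{c}T$ has order $\le1$ (as $T$ has order $0$) and $g_{j}\to g$ in $\mathcal{C}^{1}$ there; and the piece $\int_{W'}f\,\dd|z|^{2}\wedge g_{j}\,\dd^{c}T\wedge\beta^{p-1}$ converges because $f\,\dd|z|^{2}\wedge\beta^{p-1}$ is a fixed smooth compactly supported form and, by hypothesis, $g_{j}\,\dd^{c}T\to g\,\dd^{c}T$ weakly$^{*}$. Hence this integral is uniformly bounded as well, and assembling the three estimates gives $\sup_{j}\int_{W}(-g_{j})\,T\wedge\beta^{p}<\infty$.

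Finally, since $-g_{j}\uparrow-g$ and $T\ge0$, the positive currents $(-g_{j})T$ increase on $W$ while their masses stay bounded, so they converge weakly$^{*}$ by monotone convergence; patching this limit with the $\mathcal{C}^{1}$--limit already available on $\Omega\setminus L_{g}$ produces a positive current $S$ on $\Omega$ that restricts, off $L_{g}$, to the honest product $gT$. On $\Omega\setminus L_{g}$ the limit is this product irrespective of the approximating sequence, while on $W$ one has $\langle S,\alpha\rangle=\lim_{j}\langle T,(-g_{j})\alpha\rangle$ for a positive test form $\alpha$, and by the classical monotone convergence theorem this depends only on $g$, $T$ and $\alpha$; so $S$ is independent of the sequence, and one sets $gT:=S$. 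The single delicate point in this scheme is the cross term $\dd g_{j}\wedge\dd^{c}T$: for a general compact $L_{g}$ it escapes the sphere-slicing of Lemma~\ref{single}, and the sole role of the hypothesis ``$g\,\dd^{c}T$ well defined'' is to keep it under control.
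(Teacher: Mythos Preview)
Your argument is correct and follows essentially the same route as the paper: test $\ddc(g_{j}T)$ against $f|z|^{2}$ via Stokes, expand $\ddc(g_{j}T)$ into three pieces, control $\ddc g_{j}\wedge T$ and $g_{j}\,\ddc T$ through Lemma~\ref{compact}, and handle the cross term $\dd g_{j}\wedge\dd^{c}T$ by writing it as $\dd(g_{j}\,\dd^{c}T)-g_{j}\,\ddc T$ and invoking the hypothesis that $g\,\dd^{c}T$ is well defined. The paper is terser---it simply asserts that $\dd g\wedge\dd^{c}T$ is well defined and then bounds the integral directly---while you spell out the integration by parts and the normalisation $g_{j}\le0$ explicitly, but the substance is identical.
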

The result generalizes the case when $\dd T=0$. One can also implement it for the currents $T$ where $\dd T$ has $L^{q}$ coefficients, $q>1$. 
\begin{proof}
We keep the notation of the proof of Lemma \ref{compact} taking into consideration that $L_{g} \subset W$. It is obvious that $\dd g_{j}\wedge \dd^{c}T=\dd (g_{j}\dd^{c}T)-g_{j}\ddc T$. Hence by Lemma \ref{compact}, the current $\dd g \wedge \dd^{c}T$ is well defined. Now, by applying Stokes' formula we have
\begin{equation}\label{ddc(gT)}
\int_{W^{'}} -\ddc (f \vert z \vert^{2}) \wedge g_{j} T  \wedge \beta^{p-1}= \int_{W^{'}} - f \vert z \vert^{2}  \ddc (g_{j}T)\wedge \beta^{p-1}.
\end{equation}
But
\begin{equation}
\begin{split}
\int_{W^{'}} -f \vert z \vert^{2} \ddc (g_{j}T)\wedge \beta^{p-1}&= \int_{W^{'}} f \vert z \vert^{2}  \ddc g_{j}\wedge T\wedge \beta^{p-1} \\ & \ \ + \int_{W^{'}} f \vert z \vert^{2} g_{j} \ddc T\wedge \beta^{p-1} \\
& \ \ + 2 \int_{W^{'}} f \vert z \vert^{2}  \dd g_{j}\wedge \dd^{c} T\wedge \beta^{p-1}.
\end{split}
\end{equation}
Which means that
\begin{equation}
\begin{split}
\sup_{j} \left\vert \int_{W^{'}} -f \vert z \vert^{2} \ddc (g_{j}T)\wedge \beta^{p-1} \right\vert < \infty 
\end{split}
\end{equation}
because of Lemma \ref{compact}. Meanwhile, the left hand side of (\ref{ddc(gT)}) involves terms $\mathcal{O}(f)$ where $\dd f$,  $\dd^{c}f$ and $\ddc f$ appear. And the properties of $f$ make these terms defined and uniformly bounded as the locus points of $g$ are avoided. Hence,   
\begin{equation}
\begin{split}
- \int_{W} g_{j}  T  \wedge \beta^{p} &\leq - \int_{W^{'}} f g_{j}  T  \wedge \beta^{p} \\ & \leq \vert \mathcal{O}(f) \vert + \left\vert  \int_{W^{'}} -f \vert z \vert^{2} \ddc (g_{j}T) \wedge \beta^{p-1} \right\vert < \infty.
\end{split}
\end{equation} 
This yields to the current $gT$.
\end{proof}
Next we give conditions on the locus points of $g$ that allow the existence of $gT$ regardless the compactness property.

\begin{thm}\label{A:H2p-1}
Let $T$ be a positive  plurisubharmonic current of bi-dimension $(p,p)$ on  $\Omega $ and $g\in Psh^{-}(\Omega) \cap\mathcal{C}^{1}(\Omega \setminus L_{g})$. If the current $g\dd^{c} T$ is well defined and $\mathcal{H}_{2p-1}(L_{g}\cap \textrm{Supp} \ T)=0$, then the current $gT$ is well defined.
\end{thm}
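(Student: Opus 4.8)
The plan is to localise the problem and then, near the bad set $A:=L_{g}\cap\textrm{Supp}\,T$, to run a Stokes' formula mass estimate in the spirit of Theorem~\ref{1st:main} and Lemma~\ref{single}, the only new ingredient being that the single compactly supported cutoff must be replaced by a family of cutoffs adapted to $A$ via the hypothesis $\mathcal{H}_{2p-1}(A)=0$, in the manner of Demailly~\cite{De2}. First I would set up the reduction: since $g_{j}\searrow g<0$, on every $\Omega'\Subset\Omega$ the currents $-g_{j}T$ form, for $j$ large, an increasing sequence of positive currents, so $g_{j}T$ converges weakly$^{*}$ on $\Omega$ (to a limit independent of the approximating sequence, by the usual comparison with $\max(g_{j},g_{k}')$) as soon as $\sup_{j}\int_{\Omega'}(-g_{j})\,T\wedge\beta^{p}<\infty$ for each such $\Omega'$. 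Away from $A$ this is automatic: if $V$ is an open neighbourhood of $\overline{\Omega'}\cap A$, then $(\overline{\Omega'}\setminus V)\cap\textrm{Supp}\,T$ is a compact subset of $\Omega\setminus L_{g}$ on which $g_{j}\to g$ in $\mathcal{C}^{1}$, hence $\sup_{j}\int_{\Omega'\setminus V}(-g_{j})\,T\wedge\beta^{p}\le C_{V}\|T\wedge\beta^{p}\|_{\overline{\Omega'}}<\infty$. So everything comes down to bounding $\int_{V}(-g_{j})\,T\wedge\beta^{p}$ for one conveniently small $V\supset\overline{\Omega'}\cap A$.

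Using $\mathcal{H}_{2p-1}(A)=0$, I would build for each small $\eta>0$ a cutoff $\chi_{\eta}\in\mathcal{C}^{\infty}_{0}(\Omega)$, $0\le\chi_{\eta}\le1$, equal to $1$ on a neighbourhood of $\overline{\Omega'}\cap A$ and supported in a fixed small $V$, chosen according to the cutoff construction of \cite{De2}. Substituting $\chi_{\eta}|z|^{2}$ for $f|z|^{2}$ in the Stokes computation of \eqref{ddc(gT)} gives
\begin{equation*}
\int\chi_{\eta}(-g_{j})\,T\wedge\beta^{p}=-\int\chi_{\eta}|z|^{2}\,\ddc(g_{j}T)\wedge\beta^{p-1}+\mathcal{O}(\chi_{\eta}),\qquad \ddc(g_{j}T)=\ddc g_{j}\wedge T+2\,\dd g_{j}\wedge\dd^{c}T+g_{j}\,\ddc T,
\end{equation*}
where $\mathcal{O}(\chi_{\eta})$ gathers all terms carrying $\dd\chi_{\eta}$, $\dd^{c}\chi_{\eta}$ or $\ddc\chi_{\eta}$. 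The bulk term is uniformly bounded in $j$: $\int\chi_{\eta}|z|^{2}\ddc g_{j}\wedge T\wedge\beta^{p-1}$ by the inequality \eqref{fddcg} (with $\chi_{\eta}$ in the role of $f$), while $g_{j}\,\ddc T$ and $\dd g_{j}\wedge\dd^{c}T=\dd(g_{j}\,\dd^{c}T)-g_{j}\,\ddc T$ are controlled because $g\,\dd^{c}T$ is assumed well defined.

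The remaining, and decisive, step is to show that $\mathcal{O}(\chi_{\eta})$ does not spoil the estimate. After an integration by parts turning each $\ddc\chi_{\eta}$ into $\dd\chi_{\eta}$, every term of $\mathcal{O}(\chi_{\eta})$ is of the form $\int\dd\chi_{\eta}\wedge\omega_{j}$ with $\omega_{j}$ assembled from $\dd^{c}|z|^{2}$, $\dd^{c}g_{j}$, $g_{j}$, $T$, $\dd^{c}T$ and $\beta^{p-1}$; by Cauchy--Schwarz against the positive current built from the relevant factors, such a term is bounded by $\bigl(\int\dd\chi_{\eta}\wedge\dd^{c}\chi_{\eta}\wedge(-g_{j})\,T\wedge\beta^{p-1}\bigr)^{1/2}$ times a factor that is either uniformly bounded or comparable to $\mathcal{I}_{j}:=\int_{\textrm{Supp}\,\chi_{\eta}}(-g_{j})\,T\wedge\beta^{p}$. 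The point of the cutoff construction is that the first factor is, for $\eta$ small, at most a small multiple of $\mathcal{I}_{j}$ plus a constant; one then concludes by an absorption inequality $\mathcal{I}_{j}\le C+\tfrac12\,\mathcal{I}_{j}$ exactly as in the proof of Theorem~\ref{p geq 2}, and fixing such an $\eta$ gives $\sup_{j}\mathcal{I}_{j}<\infty$, hence $\sup_{j}\int_{\Omega'}(-g_{j})\,T\wedge\beta^{p}<\infty$ by the reduction above, so $gT$ is well defined.

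I expect the main obstacle to be precisely this last estimate: producing cutoffs $\chi_{\eta}$ for which $\int\dd\chi_{\eta}\wedge\dd^{c}\chi_{\eta}\wedge S$ is absorbable (resp.\ tends to $0$) against the whole family of positive currents $S$ that enters $\mathcal{O}(\chi_{\eta})$. In the closed case $\dd T=0$ this is exactly the technical core of \cite{De2}; here it is heavier because of the extra contributions $\dd g_{j}\wedge\dd^{c}T$ and $g_{j}\,\ddc T$, which have no analogue when $T$ is closed and which one must keep under control by invoking the hypothesis that $g\,\dd^{c}T$ is well defined.
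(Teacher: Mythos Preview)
Your approach is genuinely different from the paper's, and the step you yourself flag as the main obstacle is a real gap that the paper sidesteps entirely.

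The paper does \emph{not} build cutoffs adapted to $A$. It uses the hypothesis $\mathcal{H}_{2p-1}(L_{g}\cap\textrm{Supp}\,T)=0$ only through a Bishop--Shiffman coordinate choice (\cite{Bi},\cite{Sh}): near any point of $A$ there are coordinates $(z',z'')\in\complex^{p-1}\times\complex^{n-p+1}$ and a polydisk $\Delta'\times\Delta''$ with $(\overline{\Delta'}\times\partial\Delta'')\cap A=\emptyset$. One then slices $T$ by the projection $\pi(z',z'')=z'$: for a.e.\ $z'$ the slice $\langle T,\pi,z'\rangle$ is a positive plurisubharmonic current of bidimension $(1,1)$ supported in $\{z'\}\times\Delta''$, on whose support $L_{g}$ is met only in a compact set. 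Theorem~\ref{1st:main} therefore applies fibre by fibre, giving weak$^{*}$ convergence of $\langle g_{j}T,\pi,z'\rangle$, and the slicing formula reassembles this into the convergence of $g_{j}T$ on $\Delta'\times\Delta''$. No absorption inequality is needed; the Hausdorff condition is consumed purely geometrically.

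Your route, by contrast, needs cutoffs $\chi_{\eta}$ for which $\int \dd\chi_{\eta}\wedge\dd^{c}\chi_{\eta}\wedge(-g_{j})T\wedge\beta^{p-1}$ is a small multiple of $\mathcal{I}_{j}$ plus a constant. A covering of $A$ by balls of radii $r_{k}$ with $\sum r_{k}^{2p-1}<\eta$ only gives $\dd\chi_{\eta}\wedge\dd^{c}\chi_{\eta}\lesssim r_{k}^{-2}\beta$ on the $k$-th shell, so the bound you get is $\sum_{k}r_{k}^{-2}\int_{\textrm{shell}_{k}}(-g_{j})T\wedge\beta^{p}$; neither the $(2p-1)$-Hausdorff hypothesis nor any a priori control on $(-g_{j})T$ (which is exactly the unknown) makes this absorbable into $\mathcal{I}_{j}$. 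In the closed case of \cite{De2} one can integrate by parts once more and use $\dd T=0$ to close the estimate, but here every integration by parts regenerates terms carrying $\dd^{c}T$ and $\ddc T$ multiplied by $g_{j}$ or $\dd g_{j}$, and the loop does not close with the tools at hand. The paper's slicing argument avoids all of this by reducing to the already-established compact case of Theorem~\ref{1st:main}.
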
 

\begin{proof}
 Let us assume that $ 0 \in {\textrm{Supp} \ T}\cap L_{g}$. Since $\mathcal{H}_{2p-1}(L_{g}\cap \textrm{Supp} \ T)=0$, then by \cite{Bi} and \cite{Sh}, there exist a system of coordinates $(z^{\prime},z^{\prime\prime})\in \complex^{s}\times \complex^{n-s}$, $s=p-1$  and a polydisk $\bigtriangleup^{n}=\bigtriangleup^{\prime}\times \bigtriangleup^{\prime\prime}$ such that $\overline{\bigtriangleup^{\prime}}\times \partial \bigtriangleup^{\prime\prime}\cap ({\textrm{Supp} \ T}\cap L_g)=\emptyset$. Now, take $0<t<1$ so that $\bigtriangleup^{\prime}\times  \{ z^{\prime\prime}, t<\vert z^{\prime\prime}\vert<1\} \cap ({\textrm{Supp} \ T}\cap L_{g})=\emptyset$. As $\overline{ \bigtriangleup^{n}}\cap L_{g}$ is compact set, one can find a neighborhood $\omega$ of $\overline{ \bigtriangleup^{n}}\cap  L_g$ such that $\omega\cap (\bigtriangleup^{\prime}\times  \{ z^{\prime\prime}, t<\vert z^{\prime\prime}\vert<1\})=\emptyset$. Let $a\in (t,1)$. Notice that the slice $\langle T,\pi,z^{\prime}\rangle$ exists for a.e. $z^{\prime}$, and is a positive plurisubharmonic current of bi-dimension $(1,1)$ on $\Omega$, supported in $\lbrace z^{\prime}\rbrace\times \bigtriangleup^{n-p+1}$. Hence, by Theorem \ref{1st:main}, the sequence $\langle  g_{j} T, \pi, z^{\prime}\rangle$ is weakly$^{*}$ convergent since $\pi^{-1}(z^{\prime}) \cap L_{g}$ is a compact subset of $\lbrace z^{\prime}\rbrace\times \bigtriangleup^{n-p+1}$. Thus by applying the slice formula we have
\begin{equation}
\begin{split}
\lim_{j \to \infty} \int_{\bigtriangleup^{\prime}\times \bigtriangleup^{\prime\prime}} g_{j} T\wedge \pi^{*}\beta^{\prime p-1} \wedge \beta^{''} &=\lim_{j \to \infty} \int_{z^{\prime}} \langle g_{j} T, \pi,z^{\prime}\rangle \beta^{\prime p-1} \wedge \beta^{''} \\&= \int_{z^{\prime}} \langle  g T, \pi,z^{\prime}\rangle \beta^{\prime p-1} \wedge \beta^{''}\\&= \int_{\bigtriangleup^{\prime}\times \bigtriangleup^{\prime\prime}} g T\wedge \pi^{*}\beta^{\prime p-1} \wedge \beta^{''}. \nonumber
\end{split}
\end{equation}
This completes the proof.
\end{proof}

\begin{thm}\label{A:H2p-2}
Let $T$ be a positive  plurisubharmonic current of bi-dimension $(p,p)$ on  $\Omega $ and $g\in Psh^{-}(\Omega) \cap\mathcal{C}^{1}(\Omega \setminus L_{g})$. If $\mathcal{H}_{2p-2}(L_{g}\cap \textrm{Supp} \ T)$ is locally finite, then the current $gT$ is well defined.
\end{thm}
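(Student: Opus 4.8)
The plan is to follow the proof of Theorem \ref{A:H2p-1}, slicing $T$ down to bi-dimension $(1,1)$; the crucial gain is that the hypothesis that $\mathcal{H}_{2p-2}(L_{g}\cap\operatorname{Supp}T)$ is locally finite forces the locus set inside a generic fibre to be \emph{finite}, so on each fibre one may invoke Lemma \ref{single} directly. This is precisely why, unlike in Theorem \ref{A:H2p-1}, no hypothesis on $g\,\dd^{c}T$ is required here.

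Since the statement is local and trivial off $E:=L_{g}\cap\operatorname{Supp}T$, fix $0\in E$ and work near it. The set $E$ is closed and $\mathcal{H}_{2p-2}(E)$ is locally finite, hence $\mathcal{H}_{2p-1}(E)=0$; so by \cite{Bi} and \cite{Sh}, exactly as in the proof of Theorem \ref{A:H2p-1}, one may choose coordinates $(z',z'')\in\complex^{p-1}\times\complex^{n-p+1}$ and a polydisk $\bigtriangleup^{n}=\bigtriangleup'\times\bigtriangleup''\Subset\Omega$ with $\overline{\bigtriangleup'}\times\partial\bigtriangleup''\cap E=\emptyset$ and a $t\in(0,1)$ with $E\cap(\bigtriangleup'\times\{t<|z''|<1\})=\emptyset$; writing $\pi(z',z'')=z'$, this yields $\pi^{-1}(z')\cap E\cap\bigtriangleup^{n}\subset\{z'\}\times\{|z''|\le t\}\Subset\bigtriangleup''$ for all $z'\in\bigtriangleup'$. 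Moreover, the Eilenberg (coarea) inequality applied to the Lipschitz map $\pi$ and the set $E\cap\overline{\bigtriangleup^{n}}$ (of finite $\mathcal{H}_{2p-2}$ measure) shows that for almost every $z'\in\bigtriangleup'$ the fibre $F_{z'}:=\pi^{-1}(z')\cap E$ is a \emph{finite} set.

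For such $z'$ the slice $T':=\langle T,\pi,z'\rangle$ exists and is a positive plurisubharmonic current of bi-dimension $(1,1)$ on $\bigtriangleup''$, supported in $\operatorname{Supp}T\cap(\{z'\}\times\bigtriangleup'')$, so $\operatorname{Supp}T'\cap L_{g}\subseteq F_{z'}$ is finite, hence compact in $\bigtriangleup''$. Applying Lemma \ref{single} in a small ball about each point of $F_{z'}$ and patching by a partition of unity — the patching being unambiguous since $g_{j}\to g$ in $\mathcal{C}^{1}$ off $F_{z'}$ — one gets $\langle g_{j}T,\pi,z'\rangle=g_{j}T'\to gT'=:\langle gT,\pi,z'\rangle$ weakly$^{*}$ on $\bigtriangleup''$, while Lemma \ref{compact}(1) (applied on the fibre with the compact set $F_{z'}$) shows $g\,\ddc T'$ is well defined there. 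To turn this into a uniform mass bound, fix a cut-off $\chi\in\mathcal{C}^{\infty}_{0}(\{|z''|<1\})$, independent of $z'$, with $\chi\equiv 1$ on $\{|z''|\le t\}$, and rerun the estimates internal to the proofs of Lemmas \ref{compact} and \ref{single} on each fibre: this bounds $m(z'):=\|\langle(-g)T,\pi,z'\rangle\wedge\beta''\|$ by a quantity built only from $T'$ and from $g,\nabla g$ on the shell $\{t<|z''|<1\}$, which is disjoint from $L_{g}$. Integrating in $z'$, using the slicing inequality $\int_{\bigtriangleup'}\|T'\|_{U}\,d\lambda(z')\le c\,\|T\|_{\bigtriangleup'\times U}$ and the boundedness of $g,\nabla g$ on the compact set $\overline{\bigtriangleup'}\times(\overline{\bigtriangleup''}\setminus\{|z''|\le t\})\Subset\Omega\setminus L_{g}$, gives $\int_{\bigtriangleup'}m(z')\,d\lambda(z')<\infty$, and by the slice formula together with monotone convergence (in $j$ and in $z'$) this equals $\sup_{j}\int_{\bigtriangleup^{n}}(-g_{j})\,T\wedge\pi^{*}\beta^{\prime p-1}\wedge\beta''$.

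Carrying this out for a finite family of generic complex-linear projections $\pi_{1},\dots,\pi_{m}$ chosen so that $\sum_{l}\pi_{l}^{*}\beta_{l}^{\prime p-1}\wedge\beta_{l}''\ge c\,\beta^{p}$ as positive $(p,p)$-forms (such a family exists, and the coordinate normalization above is available for each since $\mathcal{H}_{2p-1}(E)=0$), one obtains $\sup_{j}\|g_{j}T\|_{K}<\infty$ for every $K\Subset\Omega$. Since $-g_{j}T$ increases when tested against positive forms, this uniform bound forces $g_{j}T$ to converge weakly$^{*}$; by monotone convergence the limit, which a comparison of two admissible sequences through their regularized maximum shows to be independent of $(g_{j})$, is the desired current $gT$. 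The genuinely delicate point is the passage from fibrewise convergence to the uniform mass bound — controlling $\int m(z')\,d\lambda(z')$ and threading it through enough projections; the remainder is a transcription of arguments already present in Lemmas \ref{compact}, \ref{single} and Theorem \ref{A:H2p-1}.
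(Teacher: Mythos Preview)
Your proof follows the paper's line exactly: slice along a generic projection $\pi:\complex^{n}\to\complex^{p-1}$, use the Hausdorff hypothesis (via \cite{Sh}/Eilenberg) to force the fibrewise locus $L_{g}\cap\operatorname{Supp}T\cap\pi^{-1}(z')$ to be discrete (hence finite, since it is already compact in the fibre), invoke Lemma \ref{single} on each slice $\langle T,\pi,z'\rangle$, and pass back through the slice formula by monotone convergence. The paper's argument is terser --- it writes ``without loss of generality $L_{g}(z')$ is a single point'' and records a single application of the slice formula, omitting the partition-of-unity patching, the multiple-projections step to dominate $\beta^{p}$, and the explicit integrability bookkeeping that you spell out --- but the skeleton is identical.
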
 
\begin{proof}
For each $z^{\prime}$ we set $L_{g}(z^{\prime})=({\textrm{Supp} \ T}\cap L_g)\cap (\lbrace z^{\prime}\rbrace\times \bigtriangleup^{\prime\prime})$. Since $\mathcal{H}_{2p-2}({L_{g }\cap \textrm{Supp} \ T})$ is locally finite, then by \cite{Sh} the set $L_{g}(z^{\prime})$ is a discrete subset for a.e. $z^{\prime}$. Without loss of generality, we may assume that $L_{g}(z^{\prime})$ is reduced to a single point $(z^{\prime},0)$. On the other hand, $T$ is  $\complex$-flat on $\Omega$. Thus, The slice $\langle T,\pi,z^{\prime}\rangle$ exists for a.e. $z^{\prime}$, and is a positive plurisubharmonic current of bi-dimension $(1,1)$ on $\Omega$, supported in $\lbrace z^{\prime}\rbrace\times \bigtriangleup^{n-p+1}$. Now, by Theorem \ref{single}, the sequence $\langle g_{j} T, \pi, z^{\prime}\rangle$ is weakly$^{*}$ convergent since $L_{g}(z^{\prime})$ is a single point. Hence the slice formula implies that
\begin{equation}
\begin{split}
\lim_{j \to \infty}\int_{\bigtriangleup^{\prime}\times \bigtriangleup^{\prime\prime}}  g_{j} T\wedge \pi^{*}\beta^{\prime p-1} \wedge \beta^{''} &=\lim_{j \to \infty} \int_{z^{\prime}} \langle g_{j} T, \pi,z^{\prime}\rangle \beta^{\prime p-1}\wedge \beta^{''}\\&=\int_{z^{\prime}} \langle  g T, \pi,z^{\prime}\rangle \beta^{\prime p-1}\wedge \beta^{''}\\&= \int_{\bigtriangleup^{\prime}\times \bigtriangleup^{\prime\prime}} g T\wedge \pi^{*}\beta^{\prime p-1}\wedge \beta^{''}.
\end{split}
\end{equation}
And our desired current is achieved.
\end{proof}
\section{The Current $\ddc g \wedge T$}

As mentioned earlier in the introduction, for the case under investigation the current $\ddc g \wedge T $ stole the show from the current $gT$. Actually, Alessandrini-Bassanilli  \cite{Al-Ba2} and Al Abdulaali \cite{Ah4} studied the definition of $\ddc g \wedge T$ for pluriharmonic current $T$ and $g$ of class $\mathcal{C}^{2}$ apart of its locus points. Al Abdulaali  \cite{Ah6} generalized the latter works to the more general case when $T$ is plurisubharmonic and $g$ of class $\mathcal{C}^{1}$ where $\mathcal{H}_{2p-2}(L_{g})$ is locally finite. In \cite{Di-Si1}, Dihn and Sibony discussed the case when $\Omega$ is a compact  K\"{a}hler manifold. They obtained the desired current when $T$ is pluriharmonic and $g$ is continuous on $\Omega$. 
   
\begin{thm}\label{2nd:thm}
Under the same hypotheses of Theorem \ref{1st:main}, the current $\ddc g \wedge T$ is well defined.
\end{thm}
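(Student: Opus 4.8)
The plan is to obtain $\ddc g\wedge T$ by inverting the Leibniz decomposition of $\ddc(g_{j}T)$, all three pieces of which are now under control. Fix a sequence $(g_{j})$ of decreasing smooth plurisubharmonic functions on $\Omega$ converging to $g$ in $\mathcal{C}^{1}(\Omega\setminus L_{g})$. Exactly as in the proof of Theorem \ref{1st:main}, the Leibniz rule in the form used there reads $\ddc(g_{j}T)=\ddc g_{j}\wedge T+2\,\dd g_{j}\wedge \dc T+g_{j}\,\ddc T$, hence $\ddc g_{j}\wedge T=\ddc(g_{j}T)-2\,\dd g_{j}\wedge \dc T-g_{j}\,\ddc T$. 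It therefore suffices to prove that each of the three currents on the right converges weakly$^{*}$ to a limit depending only on $g$ and $T$ (not on the chosen sequence $(g_{j})$).

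For the first term, Theorem \ref{1st:main} gives $g_{j}T\to gT$ weakly$^{*}$, and since $\ddc$ is continuous on currents we get $\ddc(g_{j}T)\to\ddc(gT)$. For the third term, $T$ plurisubharmonic means $\ddc T\geq 0$, so $(g_{j}\,\ddc T)_{j}$ is decreasing; by Lemma \ref{compact} the current $g\,\ddc T$ has locally finite mass, and monotone convergence yields $g_{j}\,\ddc T\to g\,\ddc T$. For the cross term, I would use the identity $\dd g_{j}\wedge \dc T=\dd(g_{j}\dc T)-g_{j}\,\ddc T$ together with the standing hypothesis that $g\dc T$ is well defined: then $g_{j}\dc T\to g\dc T$, hence (continuity of $\dd$) $\dd(g_{j}\dc T)\to\dd(g\dc T)$, and so $\dd g_{j}\wedge \dc T\to\dd(g\dc T)-g\,\ddc T$, which is sequence-independent — this is precisely the current $\dd g\wedge \dc T$ already produced in the proof of Theorem \ref{1st:main}.

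Adding up, $\ddc g_{j}\wedge T$ converges weakly$^{*}$ to $\ddc(gT)-2\,\dd(g\dc T)+g\,\ddc T$, a current built only from $g$ and $T$; we call it $\ddc g\wedge T$, and this proves the assertion. I do not anticipate a genuine obstacle at this step: the analytic heart of the matter — the mass estimates yielding $gT$, $g\,\ddc T$ and the cross term — has already been carried out in Lemma \ref{compact} and Theorem \ref{1st:main}, and only the routine passage to the limit in the Leibniz identity remains. The one place calling for care is that the cross term must be controlled through $g\dc T$ (which is hypothesized to exist) and not through $g\dd T$, which is why the identity $\dd g_{j}\wedge \dc T=\dd(g_{j}\dc T)-g_{j}\,\ddc T$, rather than its $\dd\leftrightarrow\dc$ counterpart, is the one to invoke.
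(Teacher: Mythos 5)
Your proposal is correct and follows essentially the same route as the paper: invert the Leibniz identity $\ddc(g_{j}T)=\ddc g_{j}\wedge T+2\,\dd g_{j}\wedge \dd^{c}T+g_{j}\,\ddc T$, with $\ddc(g_{j}T)\to\ddc(gT)$ coming from Theorem \ref{1st:main} (the paper phrases this by moving $\ddc$ onto the test form), $g_{j}\,\ddc T\to g\,\ddc T$ from Lemma \ref{compact}, and the cross term handled via $\dd g_{j}\wedge\dd^{c}T=\dd(g_{j}\dd^{c}T)-g_{j}\,\ddc T$ exactly as in the proof of Theorem \ref{1st:main}. No substantive difference from the paper's argument.
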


\begin{proof}
For any $\varphi \in \mathcal{C}^{\infty}_{0}(\Omega)$ we have
\begin{equation}
\int_{\Omega} \varphi \ddc (g_{j}T) \wedge \beta^{p-1}=\int_{\Omega} g_{j}\ddc \varphi \wedge T \wedge \beta^{p-1}
\end{equation}
This means that 
\begin{equation}
\begin{split}
\int_{\Omega} \varphi \ddc g_{j}\wedge T \wedge \beta^{p-1}&=\int_{\Omega} g_{j}\ddc \varphi  \wedge T \wedge \beta^{p-1} 
\\ & \ \ -\int_{\Omega} \varphi  g_{j} \ddc T \wedge \beta^{p-1}\\
& \ \ -2 \int_{\Omega} \varphi \dd g_{j}  \wedge \dd^{c} T \wedge \beta^{p-1}
\end{split}
\end{equation}
In virtue of the previous results, the sequence $\ddc g_{j} \wedge T$ converges to a current denoted by $\ddc g \wedge T$.
\end{proof}

By analogous discussion as in the proof of Theorem \ref{A:H2p-1}, one can apply the slice formula to imply the following assertion. 

\begin{thm}
Under the same hypotheses of Theorem \ref{A:H2p-1}, the current $\ddc g \wedge T$ is well defined.
\end{thm}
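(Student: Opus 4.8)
The plan is to mimic the proof of Theorem \ref{A:H2p-1} almost verbatim, replacing the role of Theorem \ref{1st:main} by Theorem \ref{2nd:thm}, and the current $gT$ by $\ddc g \wedge T$. First I would fix $\varphi \in \mathcal{C}^\infty_0(\Omega)$ and localize: since $\mathcal{H}_{2p-1}(L_g \cap \operatorname{Supp} T)=0$, at any point $0 \in L_g \cap \operatorname{Supp} T$ we invoke \cite{Bi} and \cite{Sh} to obtain, exactly as before, a coordinate splitting $\complex^{p-1}\times\complex^{n-p+1}$ and a polydisk $\bigtriangleup^n = \bigtriangleup' \times \bigtriangleup''$ with $\overline{\bigtriangleup'} \times \partial\bigtriangleup'' \cap (\operatorname{Supp} T \cap L_g) = \emptyset$, together with the annular escape region on which the fibres $\pi^{-1}(z') \cap L_g$ are compact inside $\{z'\}\times\bigtriangleup''$. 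By a partition of unity it then suffices to control $\int \varphi\, \ddc g_j \wedge T \wedge \pi^*\beta'^{p-1} \wedge \beta''$ for $\varphi$ supported in such a chart.

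Next I would note that $\langle T,\pi,z'\rangle$ exists for a.e.\ $z'$ and is, for a.e.\ such $z'$, a positive plurisubharmonic current of bi-dimension $(1,1)$ supported in $\{z'\}\times\bigtriangleup^{n-p+1}$, with $L_g$ meeting the fibre in a compact set. Moreover the hypothesis that $g\dd^c T$ is well defined passes to a.e.\ slice (the slicing of $g_j\dd^c T$ converges, by the same commutation-of-slicing argument), so Theorem \ref{1st:main} applies fibrewise; but what we really need here is Theorem \ref{2nd:thm}, which under exactly these fibrewise hypotheses gives that $\langle \ddc g_j \wedge T, \pi, z'\rangle = \ddc g_j \wedge \langle T,\pi,z'\rangle$ converges weakly$^*$ on the fibre. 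Then the slice formula yields
\begin{equation}
\lim_{j\to\infty} \int_{\bigtriangleup'\times\bigtriangleup''} \varphi\, \ddc g_j \wedge T \wedge \pi^*\beta'^{p-1}\wedge\beta'' = \lim_{j\to\infty}\int_{z'} \langle \ddc g_j\wedge T,\pi,z'\rangle(\varphi\,\beta'^{p-1}\wedge\beta'')
\end{equation}
and passing the limit inside the $z'$-integral (justified by the uniform mass bound coming from Lemma \ref{compact}, which gives $\xtnd{\ddc g \wedge T}$ locally finite, hence a dominating $L^1$ function of $z'$) identifies the limit with $\int \varphi\,\ddc g\wedge T \wedge \pi^*\beta'^{p-1}\wedge\beta''$ against the already-defined current $\ddc g\wedge T$ away from $L_g$.

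Finally I would assemble the local pieces: away from $L_g\cap\operatorname{Supp} T$ the convergence $\ddc g_j\wedge T \to \ddc g\wedge T$ is classical, and on the charts above it holds by the slicing argument; a partition of unity subordinate to this cover gives convergence globally against any test form, and the limit is independent of the approximating sequence $(g_j)$ since the slicewise limits are (by Theorem \ref{2nd:thm} applied on each fibre). The main obstacle I anticipate is the interchange of $\lim_{j}$ with $\int_{z'}$: one must produce a $j$-uniform, $z'$-locally-integrable bound on the slice masses $\|\langle \ddc g_j \wedge T,\pi,z'\rangle\|_{\bigtriangleup''}$; this is precisely where the locally finite mass of $\xtnd{\ddc g\wedge T}$ from Lemma \ref{compact}(1) is used, together with the fact that on the prescribed annulus the slices are uniformly supported away from the poles so no mass escapes through $\partial\bigtriangleup''$. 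The rest is bookkeeping already carried out in Theorems \ref{A:H2p-1} and \ref{2nd:thm}.
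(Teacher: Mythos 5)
Your proposal is essentially the paper's own argument: the paper establishes this theorem precisely by repeating the slicing proof of Theorem \ref{A:H2p-1}, with Theorem \ref{2nd:thm} playing the fibrewise role that Theorem \ref{1st:main} played there, and then applying the slice formula. Your additional care about the interchange of $\lim_j$ with the integral in $z'$ goes beyond what the paper records but does not change the route.
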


Theorem \ref{2nd:thm} allows us to define the number $$\mu (T,g):=\displaystyle{\lim_{r \to -\infty}\int_{\{g<r\}}T \wedge \ddc g \wedge \beta^{p-1}}.$$
For such number one can obtain the following comparison result.

\begin{thm}
In addition to the hypotheses of Theorem \ref{1st:main}, if $\ddc T=0$ and $u \in Psh^{-}(\Omega)\cap \mathcal{C}^{1}(\Omega \setminus L_{u})$ such that $u \dd^{c}T$ exists and
\begin{equation}
l:= \lim \sup \frac{u(z)}{g(z)} < \infty \ \textrm{as} \ g(z)\rightarrow -\infty,
\end{equation}
then $\mu (T,u) \leq l \mu (T,g)$.
\end{thm}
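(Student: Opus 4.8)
The plan is to mimic the classical comparison-theorem argument of Demailly for Lelong numbers, adapted to the $\dd^c$-signed setting. First I would note that by Theorem~\ref{2nd:thm} (and the discussion following it) both numbers $\mu(T,g)$ and $\mu(T,u)$ are well defined, since both $g\dd^c T$ and $u\dd^c T$ exist and $L_g$, $L_u$ are compact. Because $\ddc T=0$, the computation in the proof of Theorem~\ref{2nd:thm} simplifies: for $\varphi\in\cinfnot(\Omega)$ one has $\int \varphi\,\ddc g\wedge T\wedge\beta^{p-1}=\int g\,\ddc\varphi\wedge T\wedge\beta^{p-1}$, so $\ddc g\wedge T=\ddc(gT)$ in the sense of currents; the term $\dd g\wedge\dd^c T$ now contributes to $\ddc(gT)$ but not to the comparison of masses on the sublevel sets in the leading order, which is the key simplification coming from $\ddc T=0$. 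The strategy is then: (i) express $\mu(T,g)$ as a limit of masses $\nu(r):=\int_{\{g<r\}}T\wedge\ddc g\wedge\beta^{p-1}$ and show monotonicity/existence of the limit as $r\to-\infty$ using the positivity of $T\wedge\ddc g$ away from $L_g$ together with $\ddc T=0$; (ii) compare $\nu_u(r)$ and $\nu_g(r)$ on nested sublevel sets using the hypothesis $\limsup u/g\le l$.

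The heart of the argument runs as follows. Fix $l'>l$. By the hypothesis there is $R<0$ such that $u(z)\le l' g(z)$ whenever $g(z)\le R$, i.e. on $\{g<R\}$ we have $u<l' g\le 0$. For $r<R/l'$ (so that $l'r<R$) one gets the inclusion $\{u<l'r\}\supseteq\{g<r\}$ of a neighborhood of the relevant part of $L_u\cup L_g$; more precisely one works on a fixed relatively compact neighborhood $W$ of $L_g\cup L_u$ on which $f=1$, and compares the ``Lelong–type'' integrals. The cleanest route is to use, for a convex increasing $\chi:\rb\to\rb$, the identity (valid because $\ddc T=0$ and by Stokes with the cutoff $f$)
\begin{equation}
\int_{\Omega} f\,\ddc(\chi\circ g)\wedge T\wedge\beta^{p-1}
=\int_{\Omega}(\chi\circ g)\,\ddc f\wedge T\wedge\beta^{p-1},
\end{equation}
which exhibits $\int \ddc(\chi\circ g)\wedge T\wedge\beta^{p-1}$ over a neighborhood of $L_g$ as controlled by $\chi(g)$ on the support of $\dd f$, where $g$ is bounded. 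Choosing $\chi$ to approximate $\max(\cdot,r)$ (or using $\chi_r(t)=\max(t,r)-r$) one recovers $\nu(r)$ up to boundary terms that stay bounded and, after letting the cutoff shrink, shows $\lim_{r\to-\infty}\nu(r)$ exists and equals the common value $\mu(T,g)$. The same identity with $g$ replaced by $u$ gives $\mu(T,u)$, and then the pointwise inequality $u\le l' g$ on $\{g<R\}$, fed through the monotonicity of $\chi$ and positivity of $T\wedge\beta^{p-1}$ (using $\ddc T=0$ so that $T\wedge\beta^{p-1}$ is a positive measure there and the comparison is between positive quantities), yields $\mu(T,u)\le l'\,\mu(T,g)$. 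Letting $l'\downarrow l$ finishes the proof.

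I expect the main obstacle to be the rigorous justification of step (i): that the limit defining $\mu(T,g)$ exists and is independent of how one exhausts $\{g<r\}$, and that the Stokes-type manipulations with the cutoff $f$ and the convex function $\chi$ are legitimate across $L_g$. The delicate point is that $T\wedge\ddc g$ is only defined as a limit (via Theorem~\ref{2nd:thm}), not as an honest positive current across $L_g$, so one must either work with the smooth approximants $g_j$ throughout and pass to the limit at the very end — using the weak$^*$ convergence of $g_jT$ and of $\ddc g_j\wedge T$ established earlier, plus uniform mass bounds from Lemma~\ref{compact} — or invoke $\ddc T=0$ to identify $\ddc g\wedge T$ with $\ddc(gT)$ and run the comparison at the level of the closed positive current $T$ and the plurisubharmonic function $g$, where Demailly's machinery applies directly. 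The second route is cleaner, and the only thing to check carefully is that the extra term $2\dd g\wedge\dd^c T$ appearing in $\ddc(g_jT)$ (compared to $\ddc g_j\wedge T$) genuinely drops out because $\dd T=0$ is \emph{not} assumed — only $\ddc T=0$ — so one should instead keep the potential-theoretic identity in the symmetric form $\int \varphi\,\ddc g_j\wedge T\wedge\beta^{p-1}=\int g_j\,\ddc\varphi\wedge T\wedge\beta^{p-1}$ coming directly from $\ddc T=0$, which is exactly the identity used in the proof of Theorem~\ref{2nd:thm}. With that identity in hand, the comparison is a routine application of convexity and positivity, and the proof concludes by the limit argument sketched above.
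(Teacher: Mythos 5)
There is a genuine gap, and it sits at the very center of your argument. First, the identity you lean on is false under the stated hypotheses: from $\ddc T=0$ alone one gets
\begin{equation*}
\int_{\Omega}\varphi\,\ddc g_{j}\wedge T\wedge\beta^{p-1}
=\int_{\Omega} g_{j}\,\ddc\varphi\wedge T\wedge\beta^{p-1}
-2\int_{\Omega}\varphi\,\dd g_{j}\wedge\dd^{c}T\wedge\beta^{p-1},
\end{equation*}
and the cross term $2\,\dd g_{j}\wedge\dd^{c}T$ does not drop out unless $\dd T=0$, which is not assumed; this is exactly why the theorem requires $g\dd^{c}T$ and $u\dd^{c}T$ to exist (so the term is finite), not so that it vanishes. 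Your ``symmetric form'' $\int\varphi\,\ddc g_{j}\wedge T\wedge\beta^{p-1}=\int g_{j}\,\ddc\varphi\wedge T\wedge\beta^{p-1}$ is not the identity used in the proof of Theorem \ref{2nd:thm} (that proof keeps the cross term), and it is not a consequence of $\ddc T=0$; consequently $\ddc g\wedge T\neq\ddc(gT)$ in general here, and the ``clean second route'' via Demailly's machinery for closed currents is not available.

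Second, and more seriously, the comparison step itself is missing. A pointwise inequality $u\le l'g$ on $\{g<R\}$, together with monotonicity of a convex $\chi$ and positivity of $T$, does not by itself bound $\int_{\{u<r\}}T\wedge\ddc u\wedge\beta^{p-1}$ by $l'\int_{\{g<r\}}T\wedge\ddc g\wedge\beta^{p-1}$: the two integrals carry different Hessians, and no mechanism in your sketch transfers an inequality between the functions into an inequality between these masses. This is precisely what the paper's proof (following Demailly) supplies: after reducing to $l=1$ via $\lambda\mu(T,g)=\mu(T,\lambda g)$, one sets $u_{c}=\max_{\varepsilon}(u-c,g)$, notes that for $c$ large $u_{c}=g$ on a band $\{b<g\le r\}$, so that $u_{c}-g$ is supported well inside $\{g<r\}$ and Stokes together with $\ddc T=0$ gives $\int_{\{g<r\}}T\wedge\ddc u_{c}\wedge\beta^{p-1}=\int_{\{g<r\}}T\wedge\ddc g\wedge\beta^{p-1}$ (the terms in $\dd T$, $\dd^{c}T$ disappear here for bidegree reasons against $\beta^{p-1}$, not because of your identity); then $\mu(T,u_{c})=\mu(T,u)$, the inclusion $\{u_{c}<r'\}\subset\{g<r'\}$ (since $u_{c}\ge g$) and positivity of $T\wedge\ddc u_{c}$ give the chain of inequalities, and one lets $r'\to-\infty$, then $r\to-\infty$. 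Without a patching construction of this kind (or an equivalent Lelong--Jensen type argument), your steps (i)--(ii) do not yield $\mu(T,u)\le l\,\mu(T,g)$.
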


\begin{proof}
We follow a similar technique as in \cite{De2}. Since $\lambda \mu(T,g)=\mu(T,\lambda g)$ for all $\lambda \geq 0$, it is enough to show the result for $l=1$. Set $u_{c}=\max_{\varepsilon}(u-c,g)$  where $c$ is a positive constant and $\max_{\varepsilon}(x_{1},x_{2})$ is by definition
\begin{equation}
\max_{\varepsilon}(x_{1},x_{2})=\max(x_{1},x_{2})\ast \alpha_{\varepsilon},
\end{equation}
 where $\alpha_{\varepsilon}$ is a regularization kernel on $\real^{2}$ depending only on $\Vert (x_{1},x_{2})\Vert$. Now take $r^{'}<b<r<0$. Notice that for $c$ large enough we have $u_{c}=g$ on $\{ b<g \leq r\}$. Therefore,
\begin{equation}
\int_{\{g<r\}}T \wedge \ddc (u_{c}-g)\wedge \beta^{p-1}= \int_{\{g<r\}} (u_{c}-g) \ddc T \wedge \beta^{p-1}=0.
\end{equation}
But the properties of $u$ imply that $\mu (T,u_{c})=\mu (T,u-c)=\mu (T,u)$.
Hence, 
\begin{equation}
\begin{split}
\int_{\{g<r\}} T \wedge \ddc g \wedge \beta^{p-1}&=\int_{\{ g<r\}}T \wedge \ddc u_{c} \wedge \beta^{p-1}\\
& \geq \int_{\{g<r^{'}\}} T \wedge \ddc u_{c}\wedge \beta^{p-1} \\
& \geq \int_{\{u_{c}<r^{'}\}}T \wedge \ddc u_{c}\wedge \beta^{p-1}.
\end{split}
\end{equation}
We finish the proof by letting first $r^{'} \to -\infty$, and secondly $r \to -\infty$. 
\end{proof}

In Theorem \ref{2nd:thm}, if we consider $T$ to be positive plurisuperharmonic, then the statement fails to remain true. The next example illustrates this fact. Notice that, based on \cite{Ah6}, such wedge product exists when the obstacle is assumed to be of zero $(2p-2)$-Hausdorff measure.

\begin{exmp}
In $\complex$, set $T=g=\log|z|^{2}$. Then $T$ is negative and plurisubharmonic on $\{|z|<1\}$ where $g\dd^{c} T$ is well defined. But despite the fact that $L_{g}=\{ 0\}$ is of locally finite $0$-Hausdorff measure, the mass of $\ddc g \wedge T$ explodes across $\{ 0\}$.
\end{exmp}

However, local potential currents can be very useful to our settings. Remember that, by \cite{Ben-El1}, if $T$ is positive and closed, then locally there exist a negative plurisubharmonic current $U$ of bi-dimension $(p+1,p+1)$ and a smooth form $R$ such that $T=\ddc U+R$. The current $U$ is called the local potential of $T$.  
\begin{cor}
Let $T$ be a positive plurisubharmonic current of bi-dimension $(p,p), \ p \geq 1$ on  $\Omega $ and $g \in Psh^{-}(\Omega)\cap \mathcal{C}^{1}(\Omega\setminus L_{g})$. If $L_{g}$ is a single point, then $ \ddc g\wedge S$ is a well defined current on $\Omega$ where $S$ is the potential of $\ddc T$.
\end{cor}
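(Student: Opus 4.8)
The plan is to reduce the statement about $\ddc g \wedge S$ to the already-established theory for currents $gT$, by first unpacking what $S$ actually is. Here $S$ is the local potential of $\ddc T$: since $T$ is positive plurisubharmonic, $\ddc T$ is positive (and closed, because $\dd(\ddc T) = \ddc(\dd T)$ up to sign and $\ddc T$ being a positive form of the right bidegree forces $\dd \ddc T = 0$ — more precisely $\ddc T\geq 0$ is a positive current whose exterior derivative vanishes for degree reasons, so the Ben Messaoud--El Mir potential theorem from \cite{Ben-El1} applies). Hence locally we may write $\ddc T = \ddc S + R$ with $S$ a negative plurisubharmonic current of bi-dimension $(p+1,p+1)$ and $R$ a smooth form. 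In particular $S$ is itself a positive (after a sign, or rather, its negative is positive) plurisubharmonic current, so the hypotheses of Lemma \ref{single} are met with $S$ in the role of $T$ but in one higher bi-dimension.

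Next I would check that the hypotheses of Lemma \ref{single} transfer: $g \in Psh^{-}(\Omega) \cap \mathcal{C}^1(\Omega \setminus L_g)$ is unchanged, and $L_g$ is a single point by assumption. The bi-dimension of $S$ is $(p+1, p+1)$ with $p+1 \geq 2 \geq 1$, so Lemma \ref{single} directly yields that $gS$ is a well-defined current, i.e.\ $g_j S$ converges weakly$^*$ along any decreasing sequence $(g_j)$ of smooth plurisubharmonic functions converging to $g$ in $\mathcal{C}^1(\Omega \setminus L_g)$. Then I would apply $\ddc$ to this convergence: since $\ddc$ is continuous for the weak$^*$ topology, $\ddc(g_j S) \to \ddc(gS)$ weakly$^*$. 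Expanding $\ddc(g_j S) = \ddc g_j \wedge S + 2\, \dd g_j \wedge \dd^c S + g_j \ddc S$, I need each of the auxiliary pieces $\dd g_j \wedge \dd^c S$ and $g_j \ddc S$ to converge as well: $g_j \ddc S$ converges by Lemma \ref{compact}(1) applied to $S$ (or by monotone convergence since $\ddc S$ is a part of $\ddc T$, which is positive, plus the smooth correction $R$), and $\dd g_j \wedge \dd^c S = \dd(g_j \dd^c S) - g_j \ddc S$ converges once $g_j \dd^c S$ does — which follows from the fact that $\dd^c S$ differs from $\dd^c T$ (up to the smooth $\dd^c R$, and $\dd^c$ of the potential relation) by controlled terms, and $g \dd^c T$ is handled via the single-point case as in Theorem \ref{single}/Lemma \ref{single}. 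Solving for $\ddc g_j \wedge S$ then shows it converges, and we call the limit $\ddc g \wedge S$.

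The main obstacle I anticipate is the bookkeeping around the potential decomposition $\ddc T = \ddc S + R$: one must verify that the smooth form $R$ does not spoil any of the mass estimates (it won't, since $R$ is smooth and $g \in L^1_{loc}$, so $g_j R \to gR$ trivially and $\dd g_j \wedge \dd^c R$, $g_j \ddc R$ are all unproblematic near the isolated point $L_g$), and more delicately that $\dd^c S$ inherits the property ``$g \cdot (\,\cdot\,)$ is well defined'' that we need — this requires either invoking that $L_g$ is a single point so that the relevant supremum-of-mass bounds from the proof of Lemma \ref{single} go through verbatim with $S$ replacing $T$, or arguing that $\dd^c(\ddc T) = 0$ controls $\dd^c S$ up to $\dd^c R$. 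A secondary subtlety is that $S$ is only a \emph{local} potential, so the conclusion is genuinely local; but the statement is phrased on $\Omega$ with $L_g$ a single point, so working in a small ball around that point and then noting the construction away from $L_g$ is classical suffices. I would close by remarking that this corollary is the plurisuperharmonic-type counterpart that survives the counterexample above precisely because passing to the potential $S$ restores the closedness that made $\ddc g \wedge S$ tractable.
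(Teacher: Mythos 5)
Your reduction breaks down at the very first step, and in a way the paper's own counterexample is designed to expose. The potential $S$ furnished by \cite{Ben-El1} is a \emph{negative plurisubharmonic} current; equivalently $-S$ is positive but \emph{plurisuperharmonic}. Neither $S$ nor $-S$ is a positive plurisubharmonic current, so Lemma \ref{single} (and Lemma \ref{compact}, whose estimates all hinge on $T\geq 0$ together with $\ddc T\geq 0$, e.g.\ the sign in (\ref{ddcfg})) cannot be applied ``with $S$ in the role of $T$''. The example preceding the corollary, $T=g=\log|z|^{2}$ in $\complex$, is exactly a negative plurisubharmonic current with $L_{g}$ a single point for which $\ddc g\wedge T$ has infinite mass, so the step you rely on is not merely unjustified but false in general. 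A second gap is your treatment of $\dd g_{j}\wedge \dd^{c}S$: the potential relation only controls $\ddc S$ (namely $\ddc T=\ddc S+R$); it gives no relation between $\dd^{c}S$ and $\dd^{c}T$, and in this corollary there is no hypothesis that $g\dd^{c}T$ (let alone $g\dd^{c}S$) is well defined, so the claim that this term ``converges by controlled terms'' has no support. (Minor points: the potential of $\ddc T$ has bi-dimension $(p,p)$, not $(p+1,p+1)$, and $S$ is of course not closed, so the closing remark about ``restored closedness'' is also off.)

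The paper's proof avoids ever feeding $S$ itself into the machinery. Working locally one may assume $\ddc T=\ddc S$ (absorbing the smooth form $R$), and one sets $F=T-S$: this is \emph{positive} (positive minus negative) and \emph{pluriharmonic} ($\ddc F=0$), hence in particular a positive plurisubharmonic current to which the single-point results apply, as they do to $T$ itself. Both $\ddc g\wedge T$ and $\ddc g\wedge F$ are then well defined, and one simply \emph{defines} $\ddc g\wedge S:=\ddc g\wedge T-\ddc g\wedge F$. If you want to salvage your approach, this difference trick is the missing idea: you must transfer the problem to currents with the correct sign structure rather than to the potential directly.
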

\begin{proof}
As our problem is local, one can assume that $\ddc T= \ddc S$. Now, if we set $F=T-S$, then we get a positive pluriharmonic current. Hence by Theorem \ref{single}, both currents $\ddc g \wedge F$ and $\ddc g \wedge T$ are well defined. Therefore, one can define $\ddc g \wedge S$ by $\ddc g \wedge T- \ddc g \wedge F$.
\end{proof}

We end this paper by showing a case where the $\ddc g \wedge T$ can be defined without paying any attention to the derivatives of $g$. 
\begin{cor}
Let $T$ be a positive or negative plurisubharmonic current of bi-dimension $(p,p), \ p \geq 1$ on  $\Omega $ and $g \in Psh^{-}(\Omega)\cap L^{\infty}_{loc}(\Omega)$. If $\dd T$ is of order zero, then $ \ddc g\wedge T$ is a well defined current on $\Omega$.
\end{cor}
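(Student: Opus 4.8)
The plan is to imitate the approximation argument in the proof of Theorem~\ref{2nd:thm}; the novelty is that local boundedness of $g$ lets us discard every differentiability hypothesis on $g$ (indeed $g\in L^{\infty}_{loc}(\Omega)$ forces $L_{g}=\emptyset$, so $g$ is simply a bounded plurisubharmonic function, not necessarily $\mathcal{C}^{1}$). Work locally and fix a sequence $(g_{j})$ of smooth decreasing plurisubharmonic functions converging to $g$; on any relatively compact subset $g\le g_{j}\le g_{1}$, so $(g_{j})$ is locally uniformly bounded and $g_{j}\to g$ pointwise. Since $T$ is positive or negative it is of order zero; since it is plurisubharmonic the positive current $\ddc T\ge 0$ is of order zero; and $\dd T$ is of order zero by assumption, whence its two bidegree components $\partial T$ and $\bar\partial T$ --- and therefore $\dd^{c}T$ --- are of order zero as well. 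Consequently all four currents $T$, $\ddc T$, $\dd^{c}T$ and $\dd T$ have measure coefficients, so multiplying each by the locally uniformly bounded sequence $(g_{j})$ and invoking dominated convergence coefficient-wise shows that $g_{j}T$, $g_{j}\,\ddc T$, $g_{j}\,\dd^{c}T$ and $g_{j}\,\dd T$ converge weakly$^{*}$ to $gT$, $g\,\ddc T$, $g\,\dd^{c}T$ and $g\,\dd T$ respectively, each limit depending only on $g$ and $T$.

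Next I would write the Leibniz rule for the smooth functions $g_{j}$ in the form
\begin{equation*}
\ddc g_{j}\wedge T=\ddc(g_{j}T)+g_{j}\,\ddc T-\dd(g_{j}\,\dd^{c}T)+\dd^{c}(g_{j}\,\dd T),
\end{equation*}
obtained by expanding $\ddc(g_{j}T)$, $\dd(g_{j}\,\dd^{c}T)$ and $\dd^{c}(g_{j}\,\dd T)$ with the ordinary product rule and using $\dd\dd^{c}=\ddc=-\dd^{c}\dd$; after wedging with $\beta^{p-1}$ this is precisely the identity used in the proof of Theorem~\ref{2nd:thm}. Every term on the right converges weakly$^{*}$: $\ddc(g_{j}T)\to\ddc(gT)$, $\dd(g_{j}\,\dd^{c}T)\to\dd(g\,\dd^{c}T)$ and $\dd^{c}(g_{j}\,\dd T)\to\dd^{c}(g\,\dd T)$ by the weak$^{*}$ continuity of $\ddc$, $\dd$ and $\dd^{c}$ together with the previous paragraph, while $g_{j}\,\ddc T\to g\,\ddc T$ directly. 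Hence $\ddc g_{j}\wedge T$ converges weakly$^{*}$ to
\begin{equation*}
\ddc g\wedge T:=\ddc(gT)+g\,\ddc T-\dd(g\,\dd^{c}T)+\dd^{c}(g\,\dd T),
\end{equation*}
and since each of the four currents on the right depends only on $g$ and $T$, the limit is independent of the chosen sequence $(g_{j})$; this is exactly the assertion that $\ddc g\wedge T$ is well defined.

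I expect the only genuine point to be the bookkeeping in the first paragraph that converts the hypothesis ``$\dd T$ of order zero'' into ``$\dd^{c}T$ of order zero'' through the bidegree decomposition of $\dd T=\partial T+\bar\partial T$; without it one cannot even form the product $g\,\dd^{c}T$. Observe that the term $\dd g_{j}\wedge\dd^{c}T$, which in the earlier results was the source of all the difficulties (in the limit it would force one to pair the merely $L^{2}_{loc}$ form $\dd g$ with a current having only measure coefficients), never has to be treated in isolation here: it has been redistributed into $\dd(g_{j}\,\dd^{c}T)$ and $\dd^{c}(g_{j}\,\dd T)$, where in each case the differentiation falls on a current of order zero rather than on $g_{j}$. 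The remaining points --- existence of the decreasing regularization, the local uniform bound, and the coefficient-wise dominated convergence --- are routine.
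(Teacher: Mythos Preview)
Your argument is correct and follows essentially the same route as the paper: both use that $g\in L^{\infty}_{loc}$ makes $gT$, $g\,\dd^{c}T$ and $g\,\ddc T$ well defined (the paper declares this ``obvious'', while you spell out the order-zero reasoning and dominated convergence), and both then read off $\ddc g\wedge T$ from the Leibniz expansion of $\ddc(gT)$. The only cosmetic difference is that you write the identity in the symmetric form involving $\dd(g\,\dd^{c}T)$ and $\dd^{c}(g\,\dd T)$, whereas the paper writes $\ddc g\wedge T=\ddc(gT)-2\,\dd g\wedge\dd^{c}T-g\,\ddc T$ with $\dd g\wedge\dd^{c}T$ understood as $\dd(g\,\dd^{c}T)-g\,\ddc T$; after wedging with $\beta^{p-1}$ the two formulas coincide.
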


\begin{proof}
It is so obvious that the currents $gT$, $g \dd^{c}T$ and $g \ddc T$ are well defined. Therefore, one can define
\begin{equation}
\ddc g \wedge T=\ddc (gT)-2 \dd g \wedge \dd^{c}T-g \ddc T.
\end{equation}
\end{proof}

\paragraph{\textbf{Data Availability Statement.}} Data sharing not applicable to this article as no datasets were generated or analysed during the current study. 
\paragraph{\textbf{Acknowledgment.}}
All gratitude to Professors Hassine El Mir and Noureddine Ghiloufi for the helpful conversations regrading the work. 

\paragraph{\textbf{Funding.}}
The Author acknowledges the Deanship of Scientific Research at King Faisal University for the financial support under Nasher Track (Grant No. 186223).

\end{document}